\newcolumntype{M}[1]{>{\centering\arraybackslash}m{#1}}
\pgfplotsset{compat=newest}
\pgfplotsset{width=7cm,compat=1.16} 
\theoremstyle{definition}
\newtheorem*{remark}{Remark}
\theoremstyle{plain}
\newtheorem{lemma}{Lemma}
\theoremstyle{plain}
\newtheorem{theorem}{Theorem}
\newtheorem*{definition}{Definition}
\newtheorem*{remark3}{Acknowledgements}
\newcommand{\capac}{\operatorname{cap}}
\newcommand{\interior}{\operatorname{Int}}
\newcommand{\diam}{\operatorname{diam}}
\newcommand{\bd}{\operatorname{Bd}}
\DeclareMathOperator*{\esssup}{ess\,sup}
\DeclareMathOperator*{\essinf}{ess\,inf}
\title{Explosion Points and Topology of Julia Sets of Zorich maps}
\begin{document}
	\author{ATHANASIOS TSANTARIS}
\address{School of Mathematical Sciences, University of Nottingham, Nottingham NG7 2RD.}
\email{Athanasios.Tsantaris@Nottingham.ac.uk;}
	\maketitle
	\begin{abstract}
		Zorich maps are higher dimensional analogues of the complex exponential map. For the exponential family $\lambda e^z$, $\lambda>0$, it is known that for small values of $\lambda$ the Julia set is an uncountable collection of disjoint curves. The same was shown to hold for Zorich maps by Bergweiler and Nicks.
		
		 In this paper we introduce a topological model for the Julia sets of certain Zorich maps, similar to the so called \textit{straight brush} of Aarts and Oversteegen. As a corollary we show that $\infty$ is an \textit{explosion point} for the set of endpoints of the Julia sets. Moreover we introduce an object called a \textit{hairy surface} which is a compactified version of the Julia set of Zorich maps and we show that those objects are not uniquely embedded in $\mathbb{R}^3$, unlike the corresponding two dimensional objects which are all ambiently homeomorphic.  
	\end{abstract}
		\section{Introduction}
		\let\thefootnote\relax\footnote{2020 \textit{Mathematics Subject Classification}. Primary 30C65, 37F10; Secondary 54G15, 54F65}
	One of the most well-studied families of holomorphic dynamical systems is the exponential family $E_{\lambda}(z)=\lambda e^z$, $\lambda>0$. The primary object of study is the \textit{Julia set} $\mathcal{J}(E_\lambda)$, the set of points where the iterates $E_{\lambda}^n(z)$ do not form a normal family. It is well known (see the survey \cite{Devaney2010}) that when $0<\lambda<1/e$ the Julia set of $E_{\lambda}$ is a "\textit{Cantor Bouquet}". This roughly means that the Julia set consists of uncountably many disjoint curves  each connecting a point to infinity. There are a number of striking phenomena concerning the Julia sets of maps in the exponential family.  
	
	Recently there has been a systematic attempt to generalize results from the theory of iteration of holomorphic maps to the higher dimensional setting of quasiregular mappings. In \cite{berg2013, B-Nicks} Bergweiler and Nicks defined a Julia set for quasiregular maps in $\mathbb{R}^n$ that are analogous to rational or transcendental  maps in $\mathbb{C}$ and proved that this Julia set shares many of the properties of the classical Julia set. 
	
	In this higher dimensional setting there is a  map, first constructed by Zorich in \cite{Zorich}, called the Zorich map which can be thought of as the quasiregular version of the exponential map on the plane. In this paper we will restrict ourselves in $\mathbb{R}^3$ for simplicity. We will denote this map by $\mathcal{Z}$ and consider the family $\mathcal{Z}_{\lambda}=\lambda\mathcal{Z},$ $\lambda>0$. In \cite{bergk} (see also \cite[Section 7]{B-Nicks}) Bergweiler showed that the Julia set of $\mathcal{Z}_\lambda$, for small values of $\lambda$, has a similar structure with $\mathcal{J}(E_\lambda)$ for $\lambda<1/e$. In other words the Julia set consists of uncountably many disjoint curves each connecting a point to infinity.  It is also worth mentioning here that the dynamics of Zorich maps have been  studied for large values of the parameter $\lambda$. In \cite{Tsantaris2020} it was proven that for a slightly modified version of the Zorich maps and for large values of $\lambda$ the Julia set is $\mathbb{R}^3$. This can be seen as the analogous theorem to that of Misiurewicz for the exponential family in $\mathbb{C}$, see \cite{Misiu}.

	Concerning now the case of small values of the parameter $\lambda$, Bergweiler also proved that Zorich maps exhibit a dimension paradox, also known as Karpinska's paradox (see \cite{Karpinska,Karpinska1999}), which appears in the exponential family as well. This paradox concerns the endpoints of the curves that make up the Julia set. The striking fact is that while the Hausdorff dimension of the endpoints is 3  the curves without the endpoints have Hausdorff dimension 1.
	
	Another amazing fact, proven by Mayer in \cite{Mayer1990}, concerning the endpoints of those curves is that the point at $\infty$ is what is called a \textit{dispersion point} for the set of endpoints. In other words if we denote the set of endpoints by $\mathcal{E}\left(\mathcal{J}(E_\lambda)\right)$ then this set is totally disconnected while $\mathcal{E}\left(\mathcal{J}(E_\lambda)\right)\cup\{\infty\}$ is connected. That this amazing fact is even possible for a planar set is usually exhibited through a set known as Cantor's leaky tent or Knaster-Kuratowski fan (see for example \cite{LynnArthurSteen1995}). In fact something even stronger is true here,  the work of Aarts and Oversteegen in \cite{Aarts1993} shows that the point at $\infty$ is an \textit{explosion point}. This means that the set $\mathcal{E}\left(\mathcal{J}(E_\lambda)\right)$ is in fact totally separated, meaning that for any two of its points, $x,y$ there is a clopen subset $U\subset\mathcal{E}\left(\mathcal{J}(E_\lambda)\right)$ such that $x\in U$ but $y\not\in U$. Note that this property implies that $\mathcal{E}\left(\mathcal{J}(E_\lambda)\right)$ is totally disconnected and thus any explosion point is also a dispersion point. We also warn the reader that dispersion points have been sometimes called explosion points in the literature.
	
	  Moreover, it is worth mentioning that explosion and dispersion points for exponential maps have been studied for the subset of escaping points  of all endpoints  (endpoints that escape to $\infty$ under iteration) in \cite{Alhabib2016,Evdoridou2018}. Also, in \cite{comduhr2019} the presence of Cantor bouquets is proved for generalized exponential maps on the complex plane that are more general than Zorich maps (not necessarily quasiregular) and in fact those bouquets also have $\infty$ as an explosion point for their set of endpoints.\\
	
	 In the higher dimensional setting of Zorich maps now, Bergweiler in \cite{bergk} pointed out that $\infty$ might also be an explosion point for the set of endpoints  of the hairs of $\mathcal{J}(\mathcal{Z}_\lambda)$. One of the goals of this paper is to prove that this indeed holds.
	
Before we state our theorems let us define the Zorich map that we will work with. Although the construction can be made in arbitrary dimensions we confine ourselves in $\mathbb{R}^3$ for simplicity. First consider an $L$ bi-Lipschitz, sense-preserving map $h$ that maps the square \[Q:=\Big \{(x_1,x_2)\in\mathbb{R}^2:|x_1|\leq 1,|x_2|\leq 1\Big \}\] to the upper hemisphere \[\{(x_1,x_2,x_3)\in\mathbb{R}^3:x_1^2+x_2^2+x_3^2=1, x_3\geq 0\}.\] Then define $\mathcal{Z}:Q\times\mathbb{R}\to\mathbb{R}^3$ as \[\mathcal{Z}(x_1,x_2,x_3)=e^{x_3}h(x_1,x_2).\] The map $\mathcal{Z}$ maps the square beam $Q\times \mathbb{R}$ to the upper half-space. By repeatedly reflecting now, across the sides of the square beam and the $x_1x_2$ plane, we get a map $\mathcal{Z}:\mathbb{R}^3\to\mathbb{R}^3$. Note that this map is doubly periodic meaning that $\mathcal{Z}(x_1+4,x_2,x_3)=\mathcal{Z}(x_1,x_2+4,x_3)=\mathcal{Z}(x_1,x_2,x_3)$ and also $\mathcal{Z}\circ R=\mathcal Z$, where $R$ is a half-turn around the lines $x_1=2n+1, x_2=2m+1$, $n,m\in\mathbb{Z}$. Moreover, this map is not locally injective everywhere. The lines $x_1=2n+1, x_2=2m+1$, $n,m\in\mathbb{Z}$ belong to the \textit{branch set}, namely the set \[\mathcal{B}_{\mathcal{Z}}:=\{x\in\mathbb{R}^3:\mathcal{Z} \hspace{2mm}\text{is not locally homeomorphic at}\hspace{2mm}x\}.\] Also it can be shown that this map has an essential singularity at infinity, just like the exponential map on the plane, and it is quasiregular, see \cite{Iwaniec2001}. We call such quasiregular maps of \textit{transcendental type}. 

 We say that a subset $H$ of $\mathbb{C}$ (or $\mathbb{R}^n$) is a \textit{hair} if there exists a homeomorphism $\gamma~:~[0,\infty) \to H$ such that $\gamma(t)\to\infty$ as
$t\to\infty$. We call $\gamma(0)$ the endpoint of the hair $H$.

Consider now the family $\lambda \mathcal{Z}$, $\lambda>0$.
\begin{theorem}[Bergweiler and Nicks, \cite{bergk,B-Nicks}]\label{berg1}\hspace{1mm}\\
	For small enough values of $\lambda>0$ the Julia set $\mathcal{J}(\mathcal{Z}_\lambda)$ of a map in the Zorich family consists of uncountably many disjoint hairs. Each of the hairs tends to $\infty$ by having their third coordinate go to $\infty$ while the other two coordinates remain bounded. Moreover, for each point $x\not\in\mathcal{J}(\mathcal{Z}_\lambda)$, $\mathcal{Z}^n_\lambda(x)$ converges to a fixed point.
\end{theorem}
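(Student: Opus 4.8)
The plan is to run the symbolic dynamics argument of Devaney and Krych for $E_\lambda$ in the quasiregular setting, exploiting the explicit form $\mathcal Z(x_1,x_2,x_3)=e^{x_3}h(x_1,x_2)$ together with the bi-Lipschitz control on $h$. First I would record the tiling of $\mathbb R^3$ induced by the double periodicity: $\mathbb R^3$ is a union of closed ``beams'' $\{T_j\}_{j\in\mathbb Z^2}$, each bounded by two pairs of coordinate planes in the $x_1,x_2$ directions, obtained from $Q\times\mathbb R$ by translation and reflection, and on the interior of each beam $\mathcal Z_\lambda$ is a homeomorphism onto an open half-space $\{\pm x_3>0\}$. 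To a point $x$ whose forward orbit avoids the branch set one assigns its itinerary $\iota(x)=(j_0,j_1,j_2,\dots)\in(\mathbb Z^2)^{\mathbb N}$ by $\mathcal Z^k_\lambda(x)\in T_{j_k}$. Since $\mathcal Z_\lambda(0,0,t)=(0,0,\lambda e^{t})$, the third coordinate of $\mathcal Z_\lambda$ is governed by the one-dimensional map $t\mapsto\lambda e^{t}$, which for $\lambda<1/e$ has an attracting fixed point $t_*$ and a repelling one; I would fix $R$ large and work in the region $H_R=\{x_3>R\}$, noting that an orbit escapes the basin of $t_*$ only by returning to $H_R$ infinitely often.

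The analytic heart is an expansion estimate: on $H_R$ the derivative $D\mathcal Z_\lambda$ expands every vector by a factor comparable to $\lambda e^{x_3}$, because the prefactor $e^{x_3}$ differentiates to exponential growth in the $x_3$-direction and the bi-Lipschitz bounds on $h$ keep the remaining directions comparable. Consequently, for $R$ large, each branch of $\mathcal Z_\lambda^{-1}$ landing in a prescribed beam is a strong contraction on the portion of a half-space lying over $H_R$, with contraction ratio tending to $0$ as the point recedes to infinity. The absence of a Koebe-type distortion theorem is not an obstacle here, since the map is given by an explicit formula and the distortion of the inverse branches can be estimated directly from it.

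Next I would construct the hairs. Call an itinerary $(j_0,j_1,\dots)$ admissible if $\|j_k\|$ grows slowly enough that the corresponding nested pull-backs of boxes in $H_R$ stay in $H_R$ with third coordinates tending to $\infty$; there are uncountably many such itineraries. For an admissible itinerary $s$, the set of $x$ with itinerary $s$ and $\mathcal Z_\lambda^k(x)\in H_R$ for all $k$ is a decreasing intersection of pulled-back boxes, and the contraction estimate shows that, after adjoining its escaping tail, this intersection is a single arc $\gamma_s\colon[0,\infty)\to\mathbb R^3$ with $\gamma_s(t)_3\to\infty$ and the other two coordinates confined to the cross-section of a fixed beam; this is the hair of itinerary $s$, and $\gamma_s(0)$ its endpoint. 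Distinct admissible itineraries give disjoint hairs, since the itinerary is constant along a hair and determines it. One then checks that $\mathcal J(\mathcal Z_\lambda)$ is exactly the union of these hairs: a point lying on no hair has an orbit that eventually leaves $H_R$ for good and is therefore drawn into the immediate basin of the attracting fixed point $x_*=(0,0,t_*)$, hence lies in the Fatou set and converges to $x_*$; conversely the hairs meet $\mathcal J(\mathcal Z_\lambda)$ because, by the expansion estimate near the essential singularity at $\infty$, the iterates blow up along them and fail to be normal, and invariance of $\mathcal J(\mathcal Z_\lambda)$ then puts the whole hair in the Julia set.

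The main obstacle is the third step: showing that the itinerary fibres are genuinely arcs rather than merely connected sets or Cantor sets. This requires the expansion estimate in sharp form and a careful telescoping (graph-transform type) limiting argument proving simultaneously that the intersection is nonempty, that it is totally ordered by the $x_3$-coordinate, and that the induced parametrization is a homeomorphism onto $[0,\infty)$. The second most delicate point is verifying that the complement of the hairs is precisely the basin of $x_*$ and that no part of the Julia set hides in a bounded region of $\mathbb R^3$; this uses the hyperbolic-type estimate for $t\mapsto\lambda e^{t}$ available when $\lambda<1/e$.
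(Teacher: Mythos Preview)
This theorem is not proved in the present paper; it is quoted from \cite{bergk,B-Nicks} and used as a black box. The only thing the paper adds is Lemma~\ref{lemmalam}, which translates the parameter range from the parametrization $\mathscr Z_\kappa(x)=\mathcal Z(x)+(0,0,\kappa)$ used in those references to the parametrization $\mathcal Z_\lambda=\lambda\mathcal Z$, obtaining the explicit bound $0<\lambda<e^{-(\log L+L)}$. So there is no ``paper's own proof'' to compare your proposal against.

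That said, your sketch is a faithful outline of the argument in the cited references, and the paper does supply several of your ingredients as separate lemmas: the derivative bounds $\lambda e^{x_3}/L\le\ell(D\mathcal Z_\lambda)\le|D\mathcal Z_\lambda|\le\lambda L e^{x_3}$ (Lemma~\ref{lemmaeq}), the uniform contraction of inverse branches on a half-space $H_{\ge M}$ together with the forward invariance $\mathcal Z_\lambda(H_{\le M})\subset H_{<M}$ (Lemma~\ref{inverseexpansion}), and the itinerary coding over the squares $P(r_1,r_2)$. Two small corrections: the relevant threshold is not $\lambda<1/e$ but the $L$-dependent bound above, since the bi-Lipschitz constant of $h$ enters both sides of the expansion estimate; and the identity $\mathcal Z_\lambda(0,0,t)=(0,0,\lambda e^{t})$ assumes $h(0,0)=(0,0,1)$, which is not part of the hypotheses---the correct statement, and the one actually used, is the inequality $p_3(\mathcal Z_\lambda(x))\le\lambda e^{x_3}$. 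With those adjustments your plan matches the Bergweiler--Nicks argument.
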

\begin{remark}
	In \cite{bergk,B-Nicks} Theorem \ref{berg1} is shown to be true for small enough values of $\lambda$ without stating an explicit estimate for suitable values of $\lambda$. In section 2 we make this more precise by proving that Theorem \ref{berg1} holds for $0<\lambda<e^{-\left(\log L+L\right)}$.
\end{remark}
We now state our result on explosion points.
\begin{theorem}\label{main}
	Let $0<\lambda<e^{-\left(\log L+L\right)}$. Then $\infty$ is an explosion point for the set of endpoints of hairs in the Julia set, $\mathcal{E}(\mathcal{J}(\mathcal{Z}_\lambda))$.
\end{theorem}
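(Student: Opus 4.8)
The plan is to establish the two properties that together make up the notion of an explosion point: that $\mathcal{E}(\mathcal{J}(\mathcal{Z}_\lambda))\cup\{\infty\}$ is connected, and that $\mathcal{E}(\mathcal{J}(\mathcal{Z}_\lambda))$ is totally separated. First I would make precise the topological model announced in the introduction, a three-dimensional analogue of the straight brush of Aarts and Oversteegen living over a two-dimensional ``base'' of admissible itineraries, and prove that there is a homeomorphism from $\mathcal{J}(\mathcal{Z}_\lambda)\cup\{\infty\}\subseteq S^3$ onto the one-point compactification of this model which fixes the point at infinity and maps endpoints to endpoints. Here Theorem~\ref{berg1} is the essential input: it supplies the hairs, the fact that along each hair the third coordinate tends to $+\infty$ with the first two bounded (so the model genuinely lies ``over'' its base), and the semiconjugacy between $\mathcal{Z}_\lambda$ on $\mathcal{J}(\mathcal{Z}_\lambda)$ and the shift on itineraries. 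Since total separation and connectedness of $\mathcal{E}\cup\{\infty\}$ are intrinsic topological properties, it then suffices to verify them for the model, which I would do by transporting the question back to $\mathcal{J}(\mathcal{Z}_\lambda)$ itself, where the dynamics are available.

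Total separation is the easier half. Each hair of $\mathcal{J}(\mathcal{Z}_\lambda)$ carries an itinerary $\underline s=(s_0,s_1,\dots)$ recording the sequence of fundamental domains (the reflected copies of $Q\times\mathbb{R}$ together with the relevant reflection data) visited under iteration, and distinct hairs have distinct itineraries, with each hair carrying a unique endpoint that lies strictly inside the fundamental domain labelled $s_0$. Given two endpoints $p\neq q$, their itineraries first differ at some index $j$; the map $\mathcal{Z}_\lambda^j$ is continuous on $\mathcal{J}(\mathcal{Z}_\lambda)$, sends endpoints to endpoints, and therefore places $\mathcal{Z}_\lambda^j(p)$ and $\mathcal{Z}_\lambda^j(q)$ strictly inside two different fundamental domains. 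These are separated in $\mathbb{R}^3$ by one of the coordinate planes $\{x_1=2k+1\}$ or $\{x_2=2k+1\}$, and such a plane lies in the Fatou set: $\mathcal{Z}_\lambda$ maps it into $\{x_3=0\}$ (the edges of $Q$ go to the equator of the hemisphere, whose third coordinate vanishes), and $\{x_3=0\}$ has bounded forward orbit which, for $\lambda$ as small as we are assuming, lies in the basin of the attracting fixed point. Hence the wall is disjoint from $\mathcal{E}$; the two open half-spaces it cuts out induce a relatively clopen partition of $\mathcal{E}$ separating $\mathcal{Z}_\lambda^j(p)$ from $\mathcal{Z}_\lambda^j(q)$, and pulling back along the continuous map $\mathcal{Z}_\lambda^j|_{\mathcal{E}}$ yields a clopen subset of $\mathcal{E}$ containing $p$ but not $q$.

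The connectedness of $\mathcal{E}_\infty:=\mathcal{E}(\mathcal{J}(\mathcal{Z}_\lambda))\cup\{\infty\}$ is the heart of the matter, and the argument I would run is the Knaster--Kuratowski ``leaky tent'' argument adapted to the model. Two ingredients feed it: (a) the endpoints are dense in $\mathcal{J}(\mathcal{Z}_\lambda)$ (a property of the brush model), so that the closure of $\mathcal{E}_\infty$ in $S^3$ is $\mathcal{J}(\mathcal{Z}_\lambda)\cup\{\infty\}$; and (b) $\mathcal{J}(\mathcal{Z}_\lambda)\cup\{\infty\}$ is connected, being a union of arcs --- the compactified hairs $H\cup\{\infty\}$ --- all passing through the common point $\infty$. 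Now suppose $\mathcal{E}_\infty=A\sqcup B$ with $A,B$ nonempty, disjoint and relatively closed and $\infty\in A$. Taking closures in $S^3$, the sets $\overline{A}$ and $\overline{B}$ are closed, they cover the connected set $\mathcal{J}(\mathcal{Z}_\lambda)\cup\{\infty\}$ by (b), and $\overline{A}\cap\overline{B}\cap\mathcal{E}_\infty=A\cap B=\emptyset$; hence $\overline{A}\cap\overline{B}$ is a nonempty subset of $\mathcal{J}(\mathcal{Z}_\lambda)\setminus\mathcal{E}$, meeting the relative interior of some hair $H$ at a point $x_0$. The remaining task is to rule this out: near $x_0$ the Julia set has a local product structure (a transverse Cantor-type set times the hair direction), the endpoints near $x_0$ form a totally disconnected transverse slice, and the density of endpoints on both transverse sides of $H$ must be exploited to route the putative clopen separation around $x_0$, contradicting $x_0\in\overline{A}\cap\overline{B}$. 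A convenient way to package this is to prove directly for the model that every nonempty relatively clopen subset of $\mathcal{E}$ is unbounded --- equivalently that $\mathcal{E}$ has no compact relatively clopen pieces --- which is exactly the obstruction to a separation of $\mathcal{E}_\infty$ with $\infty$ on one side.

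This last step is where I expect the main difficulty to lie, and it is genuinely more delicate in $\mathbb{R}^3$ than in the plane: ``both sides'' of a hair is now a two-dimensional notion, so the elementary planar separation lemmas of Aarts--Oversteegen do not transcribe verbatim, and one is forced to work with the explicit combinatorics of the model's base in order to manufacture, in a neighbourhood of $x_0$, the clopen surfaces that implement the contradiction. Everything else --- the construction of the model, the endpoint-preserving homeomorphism, and the density of endpoints --- I expect to be laborious but routine once Theorem~\ref{berg1} and the explicit bound $\lambda<e^{-(\log L+L)}$ are in hand.
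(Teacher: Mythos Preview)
Your total separation argument is correct and self-contained, but your connectedness argument has a genuine gap that you yourself flag: the step ``rule out $x_0\in\overline{A}\cap\overline{B}$ using the local product structure'' is not carried out, and it is precisely here that the planar Aarts--Oversteegen machinery fails to transfer directly.

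The paper sidesteps this difficulty entirely by a different route. Rather than proving connectedness of $\mathcal{E}\cup\{\infty\}$ and total separation of $\mathcal{E}$ by hand, it shows that $\mathcal{J}(\mathcal{Z}_\lambda)\cup\{\infty\}$ is a \emph{Lelek fan} with top $\infty$: a smooth fan whose set of endpoints is dense. This requires only the ingredients you already list as ``routine'' --- the brush model, the homeomorphism extending to $\infty$, and density of endpoints --- together with a short check that $B\cup\{\infty\}$ is hereditarily unicoherent, arcwise connected, and smooth, all of which are immediate for a compactified straight brush. The paper then invokes the classification theorem of Charatonik and Bula--Oversteegen: any two Lelek fans are homeomorphic. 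Since Lelek's original fan has its top as an explosion point for the endpoint set, and since any homeomorphism of fans carries top to top and endpoints to endpoints, the explosion property transfers for free.

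So the comparison is this: your approach tries to prove the two halves of ``explosion point'' directly and gets stuck on the hard half; the paper packages both halves into a single structural recognition (Lelek fan) and imports the conclusion from a deep but off-the-shelf uniqueness theorem. Your direct argument for total separation is more elementary than the paper's, and would survive as an alternative proof of that half, but for connectedness the Lelek fan route is what makes the problem tractable in dimension three.
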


In the process of proving the above theorem we will need to introduce a topological model for  $\mathcal{J}(\mathcal{Z}_\lambda)$. That model is a generalization, in three dimensions, of a model that Aarts and Oversteegen introduced in \cite{Aarts1993} in order to study the topology of Julia sets. They called their model a \textit{straight brush} and proved that the Julia set of the exponential map $E_\lambda$, $0<\lambda<1/e$ is homeomorphic to a straight brush. We call our new three dimensional model a \textit{3-d straight brush}  (see section 2 for details) and we will prove the following.

\begin{theorem}\label{brush}
	Let $0<\lambda<e^{-\left(\log L+L\right)}$. Then there is a 3-d straight brush $B$ and a homeomorphism of $B$ onto the Julia set $\mathcal{J}(\mathcal{Z}_\lambda)$ of the Zorich map. Moreover, this homeomorphism extends to a homeomorphism between $B\cup\{\infty\}$ and $\mathcal{J}(\mathcal{Z}_\lambda)\cup\{\infty\}$. 
\end{theorem}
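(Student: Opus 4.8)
I propose the following route to Theorem~\ref{brush}: realize $\mathcal{J}(\mathcal{Z}_\lambda)$ as the image of an explicitly built 3-d straight brush under a map coming from symbolic dynamics, and then upgrade the resulting continuous bijection to a homeomorphism — and to a homeomorphism of the one-point compactifications — by a single properness argument.

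\emph{Step 1 (symbolic dynamics).} Since $\mathcal{Z}_\lambda$ is doubly periodic and obtained by reflections, outside the branch set $\mathbb{R}^3$ splits into fundamental domains $T_{\underline j}$, $\underline j\in\mathbb{Z}^2$, on each of which $\mathcal{Z}_\lambda$ is a homeomorphism onto an open half-space. For $x$ whose forward orbit avoids $\mathcal{B}_{\mathcal{Z}}$ set $\underline s(x)=(\underline j_0,\underline j_1,\dots)$ where $\mathcal{Z}_\lambda^k(x)\in T_{\underline j_k}$. By Theorem~\ref{berg1} and Bergweiler's construction \cite{bergk}, the itineraries realized by Julia points form a subset $\Sigma\subset(\mathbb{Z}^2)^{\mathbb{N}_0}$ (those obeying the growth bound dictated by the escape rate), and $\mathcal{J}(\mathcal{Z}_\lambda)=\bigsqcup_{\underline s\in\Sigma}H_{\underline s}$ is the disjoint union over $\underline s\in\Sigma$ of exactly one hair $H_{\underline s}$ per itinerary; I equip $\Sigma$ with the product topology.

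\emph{Step 2 (parametrizations, estimates, and the brush).} For each $\underline s\in\Sigma$ take the parametrization $h_{\underline s}:[\tau_{\underline s},\infty)\to H_{\underline s}$ normalized so that the third coordinate of $h_{\underline s}(t)$ equals $t$ (in Bergweiler's description each hair is a graph over the $x_3$-axis, with a continuous extension to the endpoint $h_{\underline s}(\tau_{\underline s})$). The analytic heart of the argument is to extract, from the uniform contraction estimates behind Theorem~\ref{berg1}, the following: (i) $h_{\underline s}(t)$ is jointly continuous in $(\underline s,t)$; (ii) the first two coordinates of $h_{\underline s}(t)$ are controlled by $\underline s$, with $\|h_{\underline s}(t)\|\to\infty$ as soon as the leading symbols of $\underline s$ grow without bound; (iii) one can perturb $\underline s$ inside $\Sigma$ in every coordinate direction with the corresponding endpoints converging, which yields the required hairiness. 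Using a standard coding of $(\mathbb{Z}^2)^{\mathbb{N}_0}$, I then embed $\Sigma$ homeomorphically, via $\iota:\Sigma\to\mathbb{R}^2$, onto a totally disconnected $Y=\iota(\Sigma)$, and transport $\tau$ to a base-point function $b:Y\to[0,\infty)$ by $b(\iota(\underline s))=\tau_{\underline s}$, so that $B:=\{(y,t):y\in Y,\ t\ge b(y)\}$ is closed in $\mathbb{R}^3$ and satisfies the 3-d straight brush axioms of Section~2 (closedness and hairiness coming from (ii)--(iii)). Define $\varphi:B\to\mathcal{J}(\mathcal{Z}_\lambda)$ by $\varphi(y,t)=h_{\iota^{-1}(y)}(t)$. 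Then $\varphi$ is continuous by (i), surjective because every Julia point lies on a hair, and injective because distinct hairs are disjoint and each $h_{\underline s}$ is a homeomorphism onto $H_{\underline s}$.

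\emph{Step 3 (properness and passing to $\infty$).} I claim $\varphi$ is proper, which finishes everything. If $(y_n,t_n)\in B$ and $\varphi(y_n,t_n)$ stays bounded, then the third coordinates $t_n$ are bounded by the normalization, and the $y_n$ are bounded by (ii) of Step 2; since $B$ is closed in $\mathbb{R}^3$, a subsequence of $(y_n,t_n)$ converges in $B$. Both $B$ and $\mathcal{J}(\mathcal{Z}_\lambda)$ are closed in $\mathbb{R}^3$, so $B\cup\{\infty\}$ and $\mathcal{J}(\mathcal{Z}_\lambda)\cup\{\infty\}$ are compact subsets of $S^3=\mathbb{R}^3\cup\{\infty\}$. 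Extend $\varphi$ by $\hat\varphi(\infty)=\infty$; it is continuous on $B$ by Step~2, and continuous at $\infty$ precisely because $\varphi$ is proper (if $x_n\to\infty$ in $B\cup\{\infty\}$ it leaves every bounded set, so $\varphi(x_n)$ leaves every bounded set, i.e. $\varphi(x_n)\to\infty$). Thus $\hat\varphi$ is a continuous bijection from a compact space to a Hausdorff space, hence a homeomorphism, and its restriction is the desired homeomorphism $B\to\mathcal{J}(\mathcal{Z}_\lambda)$. The main obstacle is concentrated in Step~2: proving joint continuity of the hairs in their itineraries and, above all, verifying that the symbolic model is a bona fide 3-d straight brush — the two-dimensional ``approach from all directions'' hairiness condition and closedness in $\mathbb{R}^3$. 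This demands quantitative control of the endpoints $h_{\underline s}(\tau_{\underline s})$ as functions of $\underline s$, sharpening Bergweiler's contraction estimates; the one-dimensional case is already the delicate core of Aarts--Oversteegen \cite{Aarts1993}, and the extra $\mathbb{Z}$-direction in the symbol space is what is genuinely new here.
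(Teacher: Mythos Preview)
Your high-level architecture --- code itineraries into $(\mathbb{R}\setminus\mathbb{Q})^2$, build a brush over that code, and finish with a compactness/properness argument on one-point compactifications --- is the same as the paper's. Step~3 in particular is exactly what the paper does (extend $\varphi$ by $\infty\mapsto\infty$ and invoke ``continuous bijection from compact to Hausdorff'').

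The substantive difference is in Step~2. You take the brush height to be the \emph{geometric} third coordinate of the Julia point, relying on the claim that each hair is a graph over the $x_3$-axis. The paper does \emph{not} do this: its brush height $x$ is a \emph{combinatorial} parameter --- the bottom level of an inductively defined sequence of cubes $R_k(x,a_1,a_2)$ tracking the orbit --- and the map $\varphi$ is defined as the nested intersection $\bigcap_k B_k$ of their pullbacks; the inverse $\psi$ is likewise a limit of box levels (equation~\eqref{eq123} shows $z_\infty$ lies in $[w_3-1,w_3]$, not that it equals $w_3$). This buys the paper two things you are missing:
\begin{itemize}
\item It never needs hairs to be graphs over $x_3$. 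That monotonicity is plausible and may be extractable from \cite{bergk}, but it is not stated or proved in the present paper, so for you it is an assumption.
\item Joint continuity and injectivity of $\varphi$, and the brush axioms (hairiness, closedness, density with the four one-coordinate approach sequences), are read off directly from the box combinatorics together with the uniform contraction of inverse branches (Lemma~\ref{inverseexpansion}). In your approach these all become analytic statements about $(\underline s,t)\mapsto h_{\underline s}(t)$ and about the endpoint map $\underline s\mapsto\tau_{\underline s}$, which you flag as the main obstacle but do not carry out.
\end{itemize}

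One further point: the density axiom in the definition requires approach sequences that fix one coordinate of $(a_1,a_2)$ and move the other. For this you cannot use an arbitrary ``standard coding'' of $(\mathbb{Z}^2)^{\mathbb{N}_0}$; you need a product coding $\iota(\underline s)=(\iota_1(s^{(1)}),\iota_2(s^{(2)}))$ applied to the two $\mathbb{Z}$-components separately. The paper does exactly this via Devaney's Farey-tree coding applied coordinatewise, and then produces the four approach sequences by altering a single late symbol in one component. Your sketch should make this explicit, since otherwise the four-direction density need not hold.
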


Another object that Aarts and Oversteegen introduced in the same paper was the \textit{straight one-sided hairy arc} which is compact object living in $[0,1]^2$. It turns out that  if we suitably embed a straight brush in the square $[0,1]^2$ and then compactify we get such an object. It also turns out that any two such objects are ambiently homeomorphic in the complex plane so that there is homeomorphism between them that also extends to the entire complex plane. Moreover,  if we compactify $\mathcal{J}(E_\lambda)$, $0<\lambda<1/e$ in a certain way then we get a space which is homeomorphic to a straight one-sided hairy arc. Aarts and Oversteegen call the homeomorphic images of straight one-sided hairy arcs just \textit{hairy arcs}.  Moreover, they show that if we embed a hairy arc to the plane in a way that it has the extra property of one-sidedness then it is ambiently homeomorphic, this time the ambient space being the Riemann sphere,  with a straight one-sided hairy arc. As a result the compactified version of $\mathcal{J}(E_\lambda)$ is a hairy arc which when suitably embedded in the Riemann sphere is ambiently homeomorphic to a  straight one-sided hairy arc.

Such considerations make sense in higher dimensions too. Thus in section \ref{hairysurfaces} we define a \textit{straight one-sided hairy square} and a \textit{hairy surface} which are the analogous objects to straight one-sided hairy arcs and hairy arcs respectively (see section \ref{hairysurfaces} for details). 

Similarly with the case of exponential maps on the plane we can show 

\begin{theorem}\label{compactify}
	Let $0<\lambda<e^{-\left(\log L+L\right)}$. Then there is a compactification $\widetilde{\mathcal{J}(\mathcal{Z}_\lambda)}$ of $\mathcal{J}(\mathcal{Z}_\lambda)$ which is a hairy surface.
\end{theorem}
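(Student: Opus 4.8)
The plan is to move everything to the combinatorial model furnished by Theorem~\ref{brush} and then compactify that model. By Theorem~\ref{brush} there is a 3-d straight brush $B$ and a homeomorphism $\phi\colon B\to\mathcal{J}(\mathcal{Z}_\lambda)$, so it is enough to equip $B$ with a compactification that is a straight one-sided hairy square: transporting it through $\phi$ then gives a compactification $\widetilde{\mathcal{J}(\mathcal{Z}_\lambda)}$ of $\mathcal{J}(\mathcal{Z}_\lambda)$ homeomorphic to a straight one-sided hairy square, hence a hairy surface by the definition adopted in section~\ref{hairysurfaces}. Thus the theorem reduces to the following three-dimensional analogue of a planar lemma of Aarts and Oversteegen: \emph{every 3-d straight brush can be embedded in the cube $[0,1]^3$ so that the closure of its image is a straight one-sided hairy square.}

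To prove this lemma I would first fix increasing homeomorphisms $\varphi,\psi\colon\mathbb{R}\to(0,1)$ and $\theta\colon[0,\infty)\to[0,1)$ and embed $B=\bigcup_{(u,v)\in Y}\{(u,v)\}\times[t_{u,v},\infty)$ by $e(u,v,w)=(\varphi(u),\psi(v),\theta(w))$, so that $B':=e(B)$ lies in $(0,1)^2\times[0,1)$ and each hair of $B$ becomes a vertical segment climbing toward the top face $F:=[0,1]^2\times\{1\}$ of the cube. Writing $\mathcal{S}:=\overline{B'}$ for the closure in $[0,1]^3$, the point is to identify $\mathcal{S}\setminus B'$. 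Closedness of $B$ (a brush axiom), read through $e$, shows that any limit point of $B'$ with third coordinate $<1$ and first two coordinates in $(0,1)$ already lies in $B'$; density of the base set $Y$ in $\mathbb{R}^2$ (which for the Zorich brush reflects the periodicity of $\mathcal{Z}_\lambda$) together with the fact that each hair of $B$ is an infinite ray shows that every point of $F$ is a limit of $B'$; and the behaviour on the side faces of the cube — which corresponds to hairs of $B$ whose base point escapes to infinity in the $x_1$- or $x_2$-direction — is controlled similarly using the brush axioms. One concludes that $\mathcal{S}$ meets each vertical line $\{(a,b)\}\times[0,1]$ in a (possibly degenerate) segment $\{(a,b)\}\times[\beta_{a,b},1]$, so that $\mathcal{S}$ is the union of the spine square $F$ and a family of hairs hanging down from it on one side — whence \emph{one-sided}.

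It then remains to check that the pair $\big([0,1]^2,\,(a,b)\mapsto\beta_{a,b}\big)$ satisfies the axioms defining a straight one-sided hairy square: compactness of $\mathcal{S}$, which is immediate; lower semicontinuity of $\beta$, equivalent to closedness of $\mathcal{S}$; and the density-of-long-hairs axiom, asserting that every point of the spine is a limit, along every direction of approach in $[0,1]^2$, of feet of hairs whose heights converge to its own. Inside the open square this last property is a direct translation, through $\varphi,\psi,\theta$, of the corresponding approximation axiom for the brush $B$; on the boundary edges and corners of $[0,1]^2$ it has to be argued by hand from density of $Y$ and the brush axioms, together with the observation (again from closedness of $B$) that the fibre of $\mathcal{S}$ over a boundary point is up-closed, hence an interval. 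Once all axioms are verified, $\mathcal{S}$ is a straight one-sided hairy square, and pulling its compactification structure back through $\phi$ completes the proof.

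The routine ingredient is the coordinate bookkeeping; the genuine obstacle is the verification of the hairy-square axioms for $\mathcal{S}$, and there are two places where the three-dimensional setting is not a mechanical copy of the planar one. First, the base of a hairy square is two-dimensional, so the density-of-long-hairs axiom must be checked against all directions of approach in $[0,1]^2$, not merely the two sides of an arc. Second, the boundary of the square — where the base coordinates of the hairs run off to infinity — has no one-dimensional analogue and must be treated separately, verifying that these boundary hairs and the way they accumulate are still compatible with the definition fixed in section~\ref{hairysurfaces}. Everything else is a direct adaptation of the argument of Aarts and Oversteegen.
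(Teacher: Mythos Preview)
Your overall strategy---transport $\mathcal{J}(\mathcal{Z}_\lambda)$ to a 3-d straight brush $B$ via Theorem~\ref{brush}, embed $B$ in $I^3$, and take the closure---is exactly the paper's. The gap is in the choice of embedding. With $e(u,v,w)=(\varphi(u),\psi(v),\theta(w))$ the third coordinate depends only on the height $w$ and not on the base point $(u,v)$, so hairs of $B$ whose base point runs off to infinity in $\mathbb{R}^2$ are not shortened in the image. Consequently the closure $\mathcal{S}$ can (and for the Zorich brush does) carry nontrivial hairs over the boundary of $I^2$. But the definition of a straight one-sided hairy square in section~\ref{hairysurfaces} explicitly requires $\ell(0,t)=\ell(1,t)=\ell(t,0)=\ell(t,1)=0$; your ``controlled similarly using the brush axioms'' cannot deliver this, because no brush axiom forces $t_{(u,v)}\to\infty$ as $|(u,v)|\to\infty$. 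Concretely, for the brush of section~\ref{section3} the image $\mathcal{Z}_\lambda(R_0)$ is the same half-shell for every choice of $(n_{0,1},n_{0,2})$, so $t_{(a_1,a_2)}$ depends only on the tail $(n_{k,1},n_{k,2})_{k\ge 1}$ of the itinerary; translating $a_1$ by an even integer leaves $t_{(a_1,a_2)}$ unchanged, and you obtain hairs of fixed positive length accumulating on a side face of the cube.

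The paper repairs this by making the third coordinate depend on the base point as well: it uses the spherical length of the ray $[w,\infty)\times\{(u,v)\}$ (with $\mathbb{R}^3$ viewed as a sphere minus a point). A ray based far from the origin has small spherical length, so distant hairs are automatically squeezed toward the base $I^2\times\{0\}$ and the boundary condition $\ell|_{\partial I^2}=0$ comes for free. With this one change the rest of your outline---closedness, the density conditions in axiom~(ii), and the four-sided approximation property~(iii)---goes through as you describe.
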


Two other natural questions to ask in this higher dimensional setting would be whether or not any two straight one-sided hairy squares are ambiently homeomorphic (the ambient space being $\mathbb{R}^3$) and whether all one-sided hairy surfaces are ambiently homeomorphic to a straight-one sided hairy square. Both of these are true in the two dimensional setting as we have already mentioned.

Although we will not give an answer to the first question here, we can show that in higher dimensions not all one-sided hairy surfaces are ambiently homeomorphic to a straight one-sided hairy square as the next theorem shows.

\begin{theorem}\label{wild}
	There exists a straight one-sided hairy square $S$ and a homemorphism $$H:S\to H(S)\subset\mathbb{R}^3$$ such that $H(S)$ is one sided and $H(S)$ and $S$ are not ambiently homeomorphic, that is there is no homeomorphism of $\mathbb{R}^3$ that maps $S$ to $H(S)$.
\end{theorem}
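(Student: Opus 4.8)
The plan is to take a \emph{straight} one-sided hairy square $S$ and to re-embed a single one of its hairs as a \emph{wildly embedded} arc, leaving everything else in place. In the plane every arc is tame, which is ultimately why all one-sided hairy arcs turn out to be ambiently homeomorphic; but in $\mathbb{R}^{3}$ there are, by the classical examples of Fox and Artin, arcs that are not ambiently equivalent to a straight line segment. A straight hairy square is built from a flat square together with honest straight segments, so \emph{every} one of its closed hairs is a tame arc. Hence if we can perform the re-embedding so that the resulting set $H(S)$ is still one-sided while one of its closed hairs has become wild, then no homeomorphism of $\mathbb{R}^{3}$ can carry $S$ onto $H(S)$, because homeomorphisms of $\mathbb{R}^{3}$ preserve tameness of arcs.

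Concretely I would proceed in three steps. First, choose the straight one-sided hairy square $S$ — we are entirely free to do so — to have a distinguished hair $h^{*}$, with base point in the base square $B$ and tip $e^{*}$, whose tip is \emph{isolated}: some closed round ball $\beta$ centred at $e^{*}$ is disjoint from $B$ and meets $S$ only in a sub-arc of $h^{*}$ terminating at $e^{*}$. This costs nothing: starting from any straight one-sided hairy square, lengthen one hair until its upper portion is separated from the remainder of $S$, upper semicontinuity of the length function keeping the object compact. Second, invoke the Fox--Artin construction to produce, inside $\beta$, a wild arc $\alpha$ joining $\partial\beta\cap h^{*}$ to $e^{*}$, tame in a neighbourhood of its lower endpoint (so that it glues to the straight remainder of $h^{*}$) but not ambiently equivalent to a straight segment, and chosen moreover so that $e^{*}$ remains accessible from the complement of $\alpha$. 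Third, define $H$ to be the identity on $S$ with the interior of the sub-arc $\beta\cap h^{*}$ removed, and to send that sub-arc homeomorphically onto $\alpha$, matching endpoints; since $S$ is compact and $H$ is a continuous injection, $H$ is a homeomorphism of $S$ onto $H(S)$, and $H(S)$, being homeomorphic to $S$, is itself a hairy surface.

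To see that $H(S)$ and $S$ are not ambiently homeomorphic I would use the following intrinsic description. In any hairy surface the base square coincides with the set of points at which the surface is locally two-dimensional, so it is a topological invariant; hence so is the family of closed hairs, namely the closures, inside the surface, of the components of the complement of the base square. Therefore, if $G$ were a homeomorphism of $\mathbb{R}^{3}$ with $G(S)=H(S)$, then $G$ would restrict to a bijection between the closed hairs of $S$ and those of $H(S)$, so some closed hair $c$ of $S$ would satisfy $G(c)=H(\overline{h^{*}})$. But $c$, being a closed hair of a straight hairy square, is a genuine straight line segment and hence a tame arc, whereas $H(\overline{h^{*}})$ contains the wild arc $\alpha$ and is therefore wild, since a sub-arc of a tame arc is tame. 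As homeomorphisms of $\mathbb{R}^{3}$ carry tame arcs to tame arcs, this is a contradiction, and no such $G$ exists.

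The remaining point — that $H(S)$ is itself one-sided — is the one I expect to require the most care, and I regard it as the main obstacle. The modification is confined to the ball $\beta$, which is disjoint from $B$, so the local picture near the base square is unchanged and the complementary-domain structure of $H(S)$ differs from that of $S$ only inside $\beta$; what must be checked is that the definition of one-sidedness from section \ref{hairysurfaces}, and in particular its accessibility requirements (including accessibility of the now-wild tip $e^{*}$), still hold after $\alpha$ has been inserted. This is exactly where the care in choosing a Fox--Artin arc with accessible wild endpoint is used, and where one must rule out interference from the non-simple-connectivity of the complement of $\alpha$. Granting this verification, Theorem \ref{wild} follows, the two inputs beyond it being the intrinsic characterisation of the hair decomposition of a hairy surface (in the spirit of Aarts and Oversteegen \cite{Aarts1993}) and the classical invariance of tameness of arcs under homeomorphisms of $\mathbb{R}^{3}$.
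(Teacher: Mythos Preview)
Your overall strategy --- make one hair of a soshs wild and then argue that homeomorphisms of $\mathbb{R}^3$ must send hairs to hairs and preserve tameness --- is the same as the paper's, and your intrinsic identification of the base and of the hair decomposition is exactly what is needed for the final contradiction. The problem is your Step~1: the isolation you ask for is \emph{forbidden} by the very definition of a straight one-sided hairy square.

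Property~(iii) of a soshs says that for every $(x,y)$ with $\ell(x,y)>0$ there are sequences $a_n\uparrow y$, $b_n\downarrow y$, $c_n\uparrow x$, $d_n\downarrow x$ with $\ell(x,a_n)\to\ell(x,y)$, etc. Consequently the tip $e^*=(x,y,\ell(x,y))$ of \emph{every} hair is an accumulation point of other hairs: the points $(x,a_n,\ell(x,a_n))$ lie on $S\setminus h^*$ and converge to $e^*$. Hence no ball $\beta$ about $e^*$ can meet $S$ only in a sub-arc of $h^*$. The same argument applies to interior points of $h^*$: for $0<t<\ell(x,y)$ the points $(x,a_n,t)$ eventually lie on the hair at $(x,a_n)$ and converge to $(x,y,t)$. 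So there is no ``free'' sub-arc of any hair at all. Your proposed fix, ``lengthen one hair until its upper portion is separated from the rest'', destroys property~(iii): compactness of $S$ forces $\ell$ to be upper semicontinuous, so after lengthening the hair at $(x_0,y_0)$ to some $L>\ell(x_0,y_0)$ one has $\limsup_n \ell(x_0,a_n)\le \ell(x_0,y_0)<L$ for every sequence $a_n\to y_0$, and the resulting object is no longer a soshs.

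This is precisely why the paper does \emph{not} try to modify a single hair in isolation. It builds a specific soshs with a unique longest hair at the centre, and then defines $H$ via a sequence of homeomorphisms $\varphi_n:B_n\to C_n$ of \emph{solid boxes} $B_n$ stacked along that hair, each $\varphi_n$ introducing one overhand knot and fixing the top and bottom faces of $B_n$. These $\varphi_n$ move every piece of $S$ lying in $B_n$, not just the central hair, so the accumulation of neighbouring hairs is carried along rather than obstructing the construction. The central hair then becomes an infinitely knotted, hence wild, arc (Moise, Ch.~19), and the tameness argument you outline finishes the proof. If you want to salvage your approach, you must likewise let $H$ act on full $3$-dimensional neighbourhoods rather than on a single arc; the Fox--Artin idea can be made to work, but only after this change.
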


The structure of the rest of paper is as follows. In section 2 we give some definitions, some preliminary results and the general idea for the proof of Theorem \ref{main}. In section 3 we construct the 3-d straight brush that corresponds to the Julia set and we prove Theorem~\ref{brush}. In section 4 we prove that $\mathcal{J}(\mathcal{Z}_\nu)\cup\{\infty\}$ is a Lelek fan (see section \ref{section2} for the definition). In Section 5 we define straight one sided hairy squares and we prove Theorem \ref{compactify}.  Finally, in section 6 we discuss the proof of Theorem \ref{wild}.

\begin{remark3}
	\emph{I would like to thank my supervisor,  Daniel Nicks, for all his help and encouragement while writing this paper.}
\end{remark3}

\section{Main Idea and preliminaries for the proof of Theorem \ref{main}}\label{section2}
Let us briefly mention what quasiregular maps are. For the general theory of quasiregular maps we refer to \cite{Rickman} and \cite{vuorinen}.   Also, see \cite{Berg1} for a survey on quasiregular dynamics.

If $d\geq 2$ and $G\subset \mathbb{R}^d$ is a domain, then for $1\leq p <\infty$ the  \textit{Sobolev space} $W^1_{p,loc}(G)$ consists of functions $f=(f_1,f_2,\cdots, f_d):G\to\mathbb{R}^d$ for which the first order weak partial derivatives $\partial_i f_j$ exist and are locally in $L^p$. A continuous map $f\in W^1_{d,loc}(G)$ is called \textit{quasiregular} if there exists a constant $K_O\geq 1$ such that \begin{equation}\label{quasi}
\left|Df(x)\right|^d\leq K_O J_f(x) \hspace{2mm} a.e.,\end{equation} where $Df(x)$ denotes the total derivative at point $x$, $$|Df(x)|=\sup_{|h|=1}|Df(x)(h)|$$ denotes the operator norm of the derivative, and $J_f(x)$ denotes the Jacobian determinant. Also let $$\ell(Df(x))=\inf_{|h|=1}|Df(x)(h)|.$$
The condition that \eqref{quasi} is satisfied for some $K_O\geq1$ implies that $$K_I\ell(Df(x))^d\geq J_f(x),\hspace{2mm} a.e.,$$
for some $K_I\geq 1$. The smallest constants $K_O$ and $K_I$ for which those two conditions hold are called the \textit{outer dilatation} and \textit{inner dilatation} respectively. We call the maximum of those two numbers the dilatation of $f$ and we denote it by $K(f)$.\\

 In \cite{berg2013} Bergweiler developed a Fatou-Julia theory for  quasiregular self-maps of $\overline{\mathbb{R}^d}$ which  can be thought of as analogues of rational maps, while in \cite{B-Nicks} Bergweiler and Nicks did the same but for transcendental type quasiregular maps. To define a Julia set for such maps we need the notion of \textit{conformal capacity} for which we refer to \cite[Chapter II, Section 10]{Rickman}. Following those two papers we define the Julia set of $f:\mathbb{R}^d\to \mathbb{R}^d$, denoted ${\mathcal{J}}(f)$, to be the set of all those $x\in\mathbb{R}^d$ such that
 $$ \capac \left(\mathbb{R}^d\setminus \bigcup_{k=1}^\infty f^k(U)\right)=0
 $$	for every neighbourhood $U$ of $x$, where $\capac$ denotes the conformal capacity. Following \cite{NICKS2017} we call the complement of ${\mathcal{J}}(f)$ the \textit{quasi-Fatou set}, and we denote it by $QF(f)$.  
 
  It turns out that the Julia set we defined enjoys many of the properties that the classical Julia set for holomorphic maps has. In particular, a property that we will use in this paper is that the Julia set is a \textit{completely invariant} set, meaning that $x\in\mathcal{J}(f)$ if and only if $f(x)\in \mathcal{J}(f)$. For more details and motivation behind the definition of the Julia set we refer to \cite{Berg1,berg2013,B-Nicks}. 
 
 As we already have noted, the Zorich map is a quasiregular map of transcendental type and thus the above definitions make sense for this map. \\

 The main idea of the proof of Theorem \ref{main} is to first show that the Julia set $\mathcal{J}(\mathcal{Z}_\lambda)$ is a so called \textit{Lelek fan}. Let us first define Lelek fans before we explain why we need this. 
 \begin{definition}[Fans]
 	Let $X$ be a continuum (compact and connected metric space). Then $X$ is a fan with \textit{top} $x_0$ when the following conditions are satisfied.
 	\begin{enumerate}[label=(\roman*)]
 		\item $X$ is hereditarily unicoherent, meaning that $K\cap L$ is connected for every pair of subcontinua  $K$, $L$.
 		\item $X$ is arcwise connected. This together with (i) implies that $X$ is uniquely arcwise connected.
 		\item $x_0$ is the only point that is the common endpoint of at least three different arcs that are otherwise disjoint.
 	\end{enumerate}
 \end{definition}

 Let $X$ be a fan and  $x$, $y\in X$. With $[x,y]$ we  will denote the unique arc connecting $x$ and $y$. Also if a point $x\in X$ is an endpoint of every arc in $X$ containing it then we call this point an \textit{endpoint}. We will use the same symbol as in the introduction to denote the set of endpoints $\mathcal{E}(X)$ of a fan.

 \begin{definition}[Lelek fans]
 	A fan with top $x_0$ is called a Lelek fan if it has the following two properties.
 	\begin{enumerate}[label=(\roman*)]
 		\item \textbf{Smoothness:} For any sequence $y_n\in X$ converging to $y\in X$ the arcs $[x_0,y_n]$ converge to $[x_0,y]$ in the Hausdorff metric.
 		\item \textbf{Density of endpoints:} The endpoints of $X$ are dense in $X$.
 	\end{enumerate} 
 \end{definition}
With this terminology we have the following.
\begin{theorem}\label{fan}
	Let $0<\lambda<e^{-\left(\log L+L\right)}$ be such that Theorem \ref{berg1} holds. Then $\mathcal{J}(\mathcal{Z}_\lambda)\cup\{\infty\}$ is a Lelek fan with top at $\infty$.
\end{theorem}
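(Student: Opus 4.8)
The plan is to deduce the statement from Theorem~\ref{brush}. That theorem supplies a homeomorphism of a $3$-d straight brush $B$ onto $\mathcal{J}(\mathcal{Z}_\lambda)$ which extends to a homeomorphism of $B\cup\{\infty\}$ onto $\mathcal{J}(\mathcal{Z}_\lambda)\cup\{\infty\}$, and ``being a Lelek fan with a prescribed top'' is a topological property, so it suffices to prove that $X:=B\cup\{\infty\}$ is a Lelek fan with top $\infty$. From the definition of a $3$-d straight brush I will use only that its hairs are straight vertical rays $R_p=\{p\}\times[e(p),\infty)$ attached to base points $p$ in a totally disconnected set $E\subset\mathbb{R}^2$, that $(p,t)\mapsto p$ is continuous on $B$, that $B$ is closed in $\mathbb{R}^3$, and that the endpoints $(p,e(p))$ are dense in $B$.

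First I would dispose of the elementary points. Since $B$ is closed in $\mathbb{R}^3$, the set $X=B\cup\{\infty\}$ is the closure of $B$ in $\mathbb{R}^3\cup\{\infty\}$, hence a compact metric space. Each $A_p:=R_p\cup\{\infty\}$ is an arc (the injective continuous image of $[e(p),\infty]$), the $A_p$ cover $X$, and any two of them meet exactly in $\{\infty\}$; thus $X$ is arcwise connected and every point of $X$ is joined to $\infty$ by an arc contained in a single $A_p$. As $E$ is totally disconnected and $(p,t)\mapsto p$ is continuous, any connected subset of $B$ has constant base point, so the hairs $R_p$ are precisely the connected components of $B$; consequently any subcontinuum of $X$ that misses $\infty$ lies in one hair and is a point or a subarc. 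Finally, a small ball about a point $z=(p,t)\in B$ meets $X$ in a segment of $R_p$ together with segments of other hairs, and these pieces lie in distinct connected components of the intersection, so any arc starting at $z$ must begin along $R_p$; hence no point other than $\infty$ can be the common endpoint of three otherwise disjoint arcs, while $\infty$ plainly is.

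The crux is hereditary unicoherence. I would first prove the claim: \emph{if $K$ is a subcontinuum of $X$ with $\infty\in K$, then $K$ meets each hair $R_p$ either in $\varnothing$ or in a terminal segment $\{p\}\times[m_p,\infty)$.} Suppose not; then $(p_0,a)\in K$ while $(p_0,d)\notin K$ for some $d>a$, and $\rho:=\operatorname{dist}((p_0,d),K)>0$. Then $K$ avoids the whole segment $\{q\}\times(d-\rho/2,d+\rho/2)$ for every base point $q$ with $|q-p_0|<\rho/2$. Now $K\cap\{x_3\le d\}$ is compact (it is closed in $K$ and omits $\infty$), so its base projection is a compact, totally disconnected, hence zero-dimensional subset of $\mathbb{R}^2$; pick a relatively clopen neighbourhood $V$ of $p_0$ in it lying in the $\rho/2$-ball, and an open set $O\subset\mathbb{R}^2$ with $O\cap(\text{that projection})=V$ and $\overline{O}$ still inside the $\rho/2$-ball. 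Then $K\cap(O\times(-\infty,d))$ has all base points in $V$, hence (by the gaps) all third coordinates $\le d-\rho/2$; so it coincides with $K\cap(V\times(-\infty,d-\rho/2])$, which is open in $K$ (as $K$ meets an open set), closed in $K$ (as $K$ meets a compact set), nonempty (it contains $(p_0,a)$), and proper (it misses $\infty$). This contradicts the connectedness of $K$ and proves the claim. Granting it, $X$ is uniquely arcwise connected with $[\infty,(p,t)]=\{p\}\times[t,\infty)\cup\{\infty\}$, and for subcontinua $K,L$ one checks in the three cases (neither, one, or both containing $\infty$) that $K\cap L$ is connected: on each hair it is an intersection of subintervals, and if $\infty\in K\cap L$ all the resulting pieces contain $\infty$. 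Hence $X$ is a fan with top $\infty$. This severing argument, and in particular the combined use of the zero-dimensionality of the base and of the straightness of the hairs, is the step I expect to require the most care.

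It remains to check the two Lelek properties. Smoothness follows by a direct computation: if $y_n=(b_n,s_n)\to y=(b,s)$ then $b_n\to b$ and $s_n\to s$, and $[\infty,y_n]=\{b_n\}\times[s_n,\infty)\cup\{\infty\}$ tends in the Hausdorff metric to $[\infty,y]=\{b\}\times[s,\infty)\cup\{\infty\}$; upper semicontinuity holds because a convergent sequence of points with base tending to $b$ and third coordinate $\ge s_n$ has limit of the form $(b,t)$ with $t\ge s$, and lower semicontinuity because each $(b,t)\in[\infty,y]$ is approximated by $(b_n,\max(t,s_n))\in[\infty,y_n]$; when $y=\infty$ the arcs $[\infty,y_n]$ have all third coordinates tending to $\infty$, so they converge to $\{\infty\}$. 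For density of endpoints, the endpoints of the fan $X$ are exactly the hair endpoints $(p,e(p))$, since a point higher up a hair is interior to $A_p$ and $\infty$ is interior to an arc joining two distinct hair endpoints; these are dense in $B$ by definition of the $3$-d straight brush, and since $B$ is unbounded they accumulate at $\infty$, so they are dense in $X$. This shows $X$, and therefore $\mathcal{J}(\mathcal{Z}_\lambda)\cup\{\infty\}$, is a Lelek fan with top $\infty$.
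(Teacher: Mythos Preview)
Your overall strategy matches the paper's: reduce to showing that $B\cup\{\infty\}$ is a Lelek fan (this is the paper's Lemma~\ref{brushlelek}) and then transport the conclusion along the homeomorphism of Theorem~\ref{brush}. Your treatment of hereditary unicoherence is in fact considerably more careful than the paper's, which simply asserts the structure of subcontinua; your severing argument using zero-dimensionality of the base is a correct and useful way to fill in that step.

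However, there is a genuine gap in your density-of-endpoints argument. You write that the endpoints $(p,e(p))$ ``are dense in $B$ by definition of the $3$-d straight brush,'' but this is not what the \emph{Density} clause in the definition says. That clause only asserts (a) that the set of base points supporting a hair is dense, and (b) that each endpoint $(t_{(a_1,a_2)},a_1,a_2)$ can be approached from four axis-parallel directions by other \emph{endpoints}. It does not say that an interior point $(x,a_1,a_2)$ with $x>t_{(a_1,a_2)}$ is a limit of endpoints. Deriving this is the substantive part of the paper's Lemma~\ref{brushlelek}: one introduces the (spherical) length function $\mathcal{L}$ on base points, observes from closedness of $B$ that $\mathcal{L}$ is upper semicontinuous, and then runs a Cantor-set argument on suitable one-dimensional slices to show that for every level $c$ the set $\{\mathcal{L}=c\}$ is dense in $\{\mathcal{L}\ge c\}$, which is exactly what yields an endpoint sequence converging to any prescribed interior point of a hair. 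Without supplying some argument of this type, the Lelek property is not established.
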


The reason why this theorem is important for what we want to prove is that 
Lelek in \cite{Lelek1961} gave an example of a Lelek fan with top $\{x_0\}$ and showed that $x_0$ is an explosion point for the set of endpoints $\mathcal{E}(X)$. Much later
Charatonik and independently Bula and Oversteegen  proved the following.
\begin{theorem}[Charatonik, Bula-Oversteegen \cite{Bula1990,char1989}]\label{bulas}\hspace{1mm}\\
	Any two Lelek fans are homeomorphic.
\end{theorem}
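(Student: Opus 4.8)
The plan is to follow the approach of Bula and Oversteegen: put every Lelek fan into a canonical form inside the \emph{Cantor fan}, and then match any two canonical forms by an approximation (back-and-forth) argument. I will indicate at the end how Charatonik's inverse-limit proof fits the same template.

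\emph{Step 1 (canonical form).} Let $C_{F}=\bigl(2^{\omega}\times[0,1]\bigr)/\bigl(2^{\omega}\times\{0\}\bigr)$ be the Cantor fan, the cone over the Cantor set, with vertex $v$; write $(c,t)$ for a point of $C_{F}$, so that $(c,0)=v$ for every $c$, and for $f\colon 2^{\omega}\to[0,1]$ put $L_{f}=\{(c,t)\in C_{F}:0\le t\le f(c)\}$. The first task is to show that every smooth metric fan $X$ with top $x_{0}$ is homeomorphic, by a homeomorphism taking $x_{0}$ to $v$, to some $L_{f}$ with $f$ upper semicontinuous: one decomposes $X\setminus\{x_{0}\}$ into the half-open arcs obtained by deleting $x_{0}$ from the closures of the components of $X\setminus\{x_{0}\}$ (those closures being exactly the maximal arcs $[x_{0},e]$ with $e$ an endpoint of $X$), and uses smoothness --- the continuity of $x\mapsto[x_{0},x]$ --- to see that this family of arcs varies continuously, so that a compatible parametrisation by normalised length yields the embedding; the requirement that $L_{f}$ be compact, hence a fan, is exactly upper semicontinuity of $f$, and any such $L_{f}$ is automatically smooth. (This embedding lemma is the classical structural input.) Granting it, one checks that the endpoints of $L_{f}$ are precisely the points $(c,f(c))$ with $f(c)>0$, so $X$ has dense endpoints exactly when the set $\{(c,f(c)):f(c)>0\}$ is dense in $L_{f}$. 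Hence every Lelek fan is homeomorphic to some $L_{f}$ with $f$ upper semicontinuous and with this density property; call these conditions $(\mathrm C)$.

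\emph{Step 2 (matching).} Given Lelek fans $X\cong L_{f}$ and $Y\cong L_{g}$ with $f,g$ satisfying $(\mathrm C)$, the plan is to build a homeomorphism $h\colon L_{f}\to L_{g}$ of the form $h(c,t)=\bigl(\varphi(c),\psi_{c}(t)\bigr)$, where $\varphi$ is a homeomorphism of $2^{\omega}$ and each $\psi_{c}\colon[0,f(c)]\to[0,g(\varphi(c))]$ is an increasing homeomorphism depending continuously on $c$, obtained as a limit of finite approximations. Fix decreasing sequences of finite clopen partitions $\mathcal P_{n}$ of the source Cantor set and $\mathcal Q_{n}$ of the target, with mesh tending to $0$, and approximate $f$ and $g$ from above by the step functions that take, on each piece of $\mathcal P_{n}$ (resp.\ $\mathcal Q_{n}$), the supremum of $f$ (resp.\ $g$) over that piece. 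At stage $n$ one produces a bijection between common refinements $\mathcal P_{n}'$ and $\mathcal Q_{n}'$, pairing pieces on which these suprema essentially agree and coherent with the stage $n-1$ bijection; here $(\mathrm C)$ is exactly what allows one to subdivide a piece into subpieces along which $f$ (resp.\ $g$) realises heights dense in $[0,M]$, $M$ being the supremum over the piece, which is what makes the piecewise linear fibre maps $\psi_{c}$ definable and consistent from stage to stage. Passing to the limit produces $\varphi$ and the family $(\psi_{c})$, and their combination is the desired homeomorphism of $L_{f}$ onto $L_{g}$; hence $X\cong Y$.

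\emph{The main obstacle.} The delicate work is entirely in Step 2. One must organise the finite matchings so that: (i) they are coherent across stages, so that $\varphi$ and $(\psi_{c})$ exist and $h$ is well defined and bijective; (ii) the fibre maps converge uniformly, which forces careful control of the pieces where $f$ or $g$ is discontinuous (governed by upper semicontinuity) or very small; and (iii) $h$ and $h^{-1}$ are continuous, including at the vertex $v$, i.e.\ the finite approximations never tear the fan near its top. Condition $(\mathrm C)$ --- the joint strength of smoothness and density of endpoints --- is used crucially at each of these points, and turning the outline into a rigorous construction is the heart of the matter. Charatonik's variant instead represents each Lelek fan as an inverse limit of finite fans with surjective confluent bonding maps satisfying a suitable genericity (which is what density of endpoints provides), and the analogue of Step 2 becomes the lemma that any two such inverse sequences can be intertwined; the obstacle is the same, merely repackaged.
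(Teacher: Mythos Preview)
The paper does not prove this theorem; it is quoted as a result from the literature (Bula--Oversteegen \cite{Bula1990} and Charatonik \cite{char1989}) and is used as a black box in the short derivation of Theorem~\ref{main}. There is therefore no ``paper's own proof'' to compare against.

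That said, your outline does follow the original Bula--Oversteegen scheme: embed every Lelek fan in the Cantor fan as a subgraph $L_f$ of an upper semicontinuous function, and then build a homeomorphism between any two such $L_f$, $L_g$ by a back-and-forth matching of clopen partitions; you also correctly identify Charatonik's inverse-limit formulation as an equivalent packaging. As you yourself flag, what you have written is a plan rather than a proof: Step~1 invokes the embedding lemma without proving it, and Step~2 only describes the shape of the inductive matching and lists the coherence conditions (i)--(iii) without actually verifying that the density-of-endpoints hypothesis lets you satisfy them simultaneously. If the intent is to supply a genuine proof for the paper, these are real gaps that would need to be filled in; if the intent is only to indicate why the cited result is plausible, your sketch is accurate in broad strokes, but the paper itself is content simply to cite the result.
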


The above theorem combined with Theorem \ref{fan} allows us to prove Theorem \ref{main}.
\begin{proof}[Proof of Theorem \ref{main}]
	Since, by Theorem \ref{fan}, $\mathcal{J}(\mathcal{Z}_\lambda)\cup \{\infty\}$ is a Lelek fan it follows by Theorem \ref{bulas} that it is homeomorphic to the one that Lelek constructed. Hence, because endpoints get mapped to endpoints and the top gets mapped to the top we get that  $\mathcal{E}(\mathcal{J}(\mathcal{Z}_\lambda))\cup\{\infty\}$ is connected while $\mathcal{E}(\mathcal{J}(\mathcal{Z}_\lambda))$ is totally separated.
\end{proof}
So the only thing left to prove now is Theorem \ref{fan}. For that we will need to construct a topological model for the Julia set for which it will be easier to show that it is a Lelek fan. For the exponential map a topological model for the Julia set is given by the so called \textit{straight brush} which was first introduced by Aarts and Oversteegen in \cite{Aarts1993}. Following them, we define a three dimensional version of a straight brush.    
	\begin{definition}[3-d Straight Brush]\hspace{1mm}\\
		A 3-d Straight Brush $B$ is a subset of \[\{(y,a_1,a_2)\in\mathbb{R}^3:y\geq0, (a_1,a_2)\in\left(\mathbb{R}\setminus\mathbb{Q}\right)^2\}\] with the following properties:\begin{enumerate}[label=(\roman*)]
			\item \textbf{Hairiness:} For every $(a_1,a_2)\in \mathbb{R}^2$, there is a $t_{(a_1,a_2)}\in[0,\infty]$ such that $(t,a_1,a_2)\in B$ if and only if $t\geq t_{(a_1,a_2)}$
			\item\textbf{Density:} The set of $(a_1,a_2)$ with $t_{(a_1,a_2)}<\infty$ is dense in $\left(\mathbb{R}\setminus\mathbb{Q}\right)^2$. Also, for any such $(a_1,a_2)$ there exist  sequences $(a_{1},a_{n,2})$, $(a_{1},b_{n,2})$, $(c_{n,1},a_{2})$, $(d_{n,1},a_{2})$,  such that $ a_{n,2}\uparrow a_2$, $ b_{n,2}\downarrow a_2$, $c_{n,1}\uparrow a_1$, $d_{n,1}\downarrow a_1$. Moreover it is true that $t_{(a_{1},a_{n,2})}\to t_{(a_1,a_2)}$ and similarly for the other sequences. 
			\item \textbf{Compact Sections:} $B$ is a closed subset of $\mathbb{R}^3$.
		\end{enumerate}
		
	\end{definition}

In order to prove Theorem \ref{fan} we will need to prove Theorem \ref{brush} first.

 Here let us also introduce some notation that we will need later. For $(r_1,r_2)\in\mathbb{Z}^2$ we  set \[P(r_1,r_2):=\{(x_1,x_2)\in\mathbb{R}^2:|x_1-2r_1|<1,|x_2-2r_2|<1\}.\] For any $c\in\mathbb{R}$ we also define the half space \[H_{>c}:=\{(x_1,x_2,x_3)\in\mathbb{R}^3:x_3>c\}.\] Note now that $\mathcal{Z}_\lambda$ maps $P(r_1,r_2)\times \mathbb{R}$ bijectively onto $H_{>0}$ or $H_{<0}$ depending on whether $r_1+r_2$ is even or odd. Also we mention here that from Bergweiler's and Nicks' work in \cite{bergk,B-Nicks} it follows that for the values of $\lambda$ they consider
 
  \begin{equation}\label{eqfatou}
  	 \overline{P(r_1,r_2)}\hspace{1mm}\text{belongs to the quasi-Fatou set when}\hspace{1mm} r_1+r_2=\text{odd}\hspace{1mm}. \end{equation}
 
 We will also need the notion of the \textit{itinerary} of a point $x\in\mathcal{J}(\mathcal{Z}_\lambda)$. To each such point we can associate a sequence \[\Delta(x)=n_0n_1n_2\dots,\] where $n_k=(n_{k,1},n_{k,2})\in\mathbb{Z}\times\mathbb{Z}$ and $n_{k,1}+n_{k,2}=\text{even}$, in such a way that $$\mathcal{Z}^k_\lambda(x)\in P(n_k)\times \mathbb{R},$$ for all $k\in\mathbb{N}$. That sequence we will call the itinerary of $x$.  
 
 We also set $p:\mathbb{R}^3\to \mathbb{R}^2$, $p_3\mathbb{R}^3\to\mathbb{R}$ to be the projection maps defined by $$p(x_1,x_2,x_3)=(x_1,x_2) \hspace{2mm}\text{and}\hspace{2mm}  p_3(x_1,x_2,x_3)=x_3.$$

 Let us note here again that in \cite{bergk} it is shown that Theorem \ref{berg1} holds for all sufficiently small values of the parameter $\lambda$ without an explicit estimate for those values. In what follows we make this more precise.
 
 \begin{lemma}\label{lemmaeq}
 	\begin{equation}\label{eq111}\frac{\lambda e^{x_3}}{L}\leq \ell\left(D\mathcal{Z}_\lambda(x_1,x_2,x_3)\right)\leq \left|D\mathcal{Z}_\lambda(x_1,x_2,x_3)\right|\leq \lambda Le^{x_3}\hspace{2mm }\text{a.e.}\end{equation}
 \end{lemma}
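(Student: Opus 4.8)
\emph{Proof proposal.} The plan is to reduce the estimate to a direct computation of the total derivative of the single-beam map $F(x_1,x_2,x_3)=e^{x_3}h(x_1,x_2)$ on $Q\times\mathbb{R}$, and then to transfer it to all of $\mathbb{R}^3$ using the reflection structure of $\mathcal{Z}$. Off the branch set $\mathcal{B}_{\mathcal{Z}}$ together with the (measure-zero) union of the beam boundaries $\partial\bigl(P(r_1,r_2)\times\mathbb{R}\bigr)$, every point lies in the interior of some beam, and there $\mathcal{Z}_\lambda$ coincides with $\lambda\,\rho\circ F\circ\tilde\sigma$, where $\tilde\sigma(x_1,x_2,x_3)=(\sigma(x_1,x_2),x_3)$ with $\sigma$ an affine isometry of $\mathbb{R}^2$ carrying that beam onto $P(0,0)\times\mathbb{R}$, and $\rho$ is a Euclidean isometry of $\mathbb{R}^3$ (a product of reflections in the plane $\{x_3=0\}$). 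Since $D\rho$ and $D\tilde\sigma$ are orthogonal and $\tilde\sigma$ does not change the third coordinate, the chain rule gives $|D\mathcal{Z}_\lambda(x)|=\lambda\,|DF(\tilde\sigma(x))|$ and $\ell(D\mathcal{Z}_\lambda(x))=\lambda\,\ell(DF(\tilde\sigma(x)))$, with the same value of $x_3$ occurring. Hence it suffices to prove
\[\frac{e^{x_3}}{L}\le\ell(DF(x))\le|DF(x)|\le Le^{x_3}\qquad\text{for a.e. }x\in Q\times\mathbb{R}.\]

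First I would compute $DF$ at a point $(x_1,x_2,x_3)$ at which $h$ is differentiable; by Rademacher's theorem this holds for a.e. such point. Writing $h=(h_1,h_2,h_3)$, the images of the standard basis vectors under $DF(x_1,x_2,x_3)$ are $e^{x_3}\partial_1 h(x_1,x_2)$, $e^{x_3}\partial_2 h(x_1,x_2)$ and $e^{x_3}h(x_1,x_2)$, so for $v=(v_1,v_2,v_3)$,
\[DF(x_1,x_2,x_3)(v)=e^{x_3}\bigl(Dh(x_1,x_2)(v_1,v_2)+v_3\,h(x_1,x_2)\bigr).\]
The key point is that $h$ takes values in the unit sphere, so differentiating $|h|^2\equiv 1$ yields $\langle h,\partial_i h\rangle=0$; thus $h(x_1,x_2)$ is orthogonal to the range of $Dh(x_1,x_2)$ and Pythagoras gives
\[\bigl|DF(x_1,x_2,x_3)(v)\bigr|^2=e^{2x_3}\Bigl(\bigl|Dh(x_1,x_2)(v_1,v_2)\bigr|^2+v_3^2\Bigr).\]
Since $h$ is $L$ bi-Lipschitz, comparing difference quotients shows that at a.e. point $|(v_1,v_2)|/L\le|Dh(x_1,x_2)(v_1,v_2)|\le L|(v_1,v_2)|$ for all $(v_1,v_2)$; substituting this and using $L\ge 1$ to absorb the $v_3^2$ term on each side gives $|v|^2/L^2\le|DF(x_1,x_2,x_3)(v)|^2/e^{2x_3}\le L^2|v|^2$. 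Taking the infimum and the supremum over unit vectors $v$ yields the displayed bounds on the beam, and $\ell(DF)\le|DF|$ is immediate.

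The argument has no genuine obstacle; the only thing requiring attention is the ``a.e.'' bookkeeping — invoking Rademacher for the differentiability of $h$, checking that the bi-Lipschitz inequality passes to the derivative via difference quotients at such points, and discarding the branch set and the beam boundaries, all of which are null sets. The actual content is the orthogonality relation $\langle h,\partial_i h\rangle=0$, which decouples the tangential and radial directions and is precisely what makes the constant equal to $L$ in both directions.
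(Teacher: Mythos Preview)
Your proposal is correct and follows essentially the same approach as the paper: both compute the derivative on the initial beam, exploit the orthogonality of $h$ with its partial derivatives (coming from $|h|\equiv 1$) to decouple the tangential and radial parts via Pythagoras, and then apply the bi-Lipschitz bound on $Dh$. The paper is terser about the a.e.\ bookkeeping and simply says the other beams ``can be handled similarly,'' whereas you spell out the reflection/isometry transfer, but the mathematical content is identical.
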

\begin{proof}
	We assume that $(x_1,x_2)\in Q$ since the other case can be handled similarly.
	We note that \begin{equation}\label{zor}D\mathcal{Z}_\lambda(x)=e^{x_3}D\mathcal{Z}_\lambda(x_1,x_2,0).\end{equation}
	 Then  $D\mathcal{Z}_\lambda(x_1,x_2,0)$ is the linear map induced by the matrix \[\lambda\begin{pmatrix}
		\frac{\partial h_1}{\partial x_1}(p(x))&\frac{\partial h_1}{\partial x_2}(p(x))&h_1(p(x))\\\\\frac{\partial h_2}{\partial x_1}(p(x))&\frac{\partial h_2}{\partial x_2}(p(x))&h_2(p(x))\\\\\frac{\partial h_3}{\partial x_1}(p(x))&\frac{\partial h_3}{\partial x_2}(p(x))&h_3(p(x))
	\end{pmatrix},\]
	where $h=(h_1,h_2,h_3)$ is the bi-Lipschitz map we used in the construction of the Zorich map.

	We now set \[A=\begin{pmatrix}
		\frac{\partial h_1}{\partial x_1}\left(p(x)\right)\\\\\frac{\partial h_2}{\partial x_1}\left(p(x)\right)\\\\\frac{\partial h_3}{\partial x_1}\left(p(x)\right)
	\end{pmatrix},\hspace{2mm} B=\begin{pmatrix}
		\frac{\partial h_1}{\partial x_2}\left(p(x)\right)\\\\\frac{\partial h_2}{\partial x_2}\left(p(x)\right)\\\\\frac{\partial h_3}{\partial x_2}\left(p(x)\right)\end{pmatrix},\hspace{2mm} C=\begin{pmatrix}
		h_1\left(p(x)\right)\\\\h_2\left(p(x)\right)\\\\h_3\left(p(x)\right)
	\end{pmatrix}
	\]
	
	and $v=(v_1,v_2,v_3)\in\mathbb{R}^3$. Then by using \eqref{zor}
	\begin{align*}\ell\left(D\mathcal{Z}_{\lambda}(x)\right)^2=\inf_{|v|=1}|D\mathcal{Z}_{\lambda}(x)(v)|^2= e^{2x_3}\lambda^2\inf_{|v|=1}|v_1A+v_2B+v_3C|^2. \end{align*}
	
	Notice now that $C$ is orthogonal to $A$ and $B$ and thus the  above equation becomes
	\begin{align*}
		\ell\left(D\mathcal{Z}_{\lambda}(x)\right)^2=&e^{2x_3}\lambda^2\inf_{|v|=1}\left(|v_1A+v_2B|^2+|v_3C|^2\right)\\=&e^{2x_3}\lambda^2\inf_{|v|=1}\left(\left|Dh(p(x))(v_1,v_2)\right|^2+|v_3|^2|C|^2\right)\\\geq&e^{2x_3}\lambda^2\inf_{|v|=1}\left(\frac{1}{L^2}|(v_1,v_2)|^2+|v_3|^2\right)\\\geq&e^{2x_3}\frac{\lambda^2}{L^2}\hspace{2mm}\text{a.e.}
	\end{align*}
Similarly we can show that $\left|D\mathcal{Z}_\lambda(x_1,x_2,x_3)\right|^2 \leq \lambda^2 L^2 e^{2x_3}$ so that \eqref{eq111} readily follows.
\end{proof}

 \begin{lemma}\label{lemmalam}
 	For all $0<\lambda<e^{-\left(\log L+L\right)}$ Theorem \ref{berg1} is true.
 \end{lemma}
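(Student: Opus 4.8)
\emph{Overview.} The plan is to retrace Bergweiler's argument from \cite{bergk} (cf. \cite[Section 7]{B-Nicks}), which establishes Theorem \ref{berg1} for all sufficiently small $\lambda$, and to verify that the explicit bound $\lambda<e^{-(\log L+L)}=\tfrac1{Le^{L}}$ suffices at every step. Two ingredients are needed: a dominating attracting fixed point whose basin absorbs everything outside the Julia set, and a uniformly expanding ``funnel'' high up in the $x_3$-direction in which a symbolic-dynamics / graph-transform construction produces the hairs. The analytic input throughout is Lemma \ref{lemmaeq} together with the identity $|\mathcal{Z}_\lambda(x)|=\lambda e^{x_3}$ (since $|h|\equiv1$).

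\emph{The Fatou side.} Since $\lambda<\tfrac1{Le^{L}}\le\tfrac1e$ (as $Le^{L}\ge e$ for $L\ge1$), on the closed half-space $\{x_3\le 1/L\}$ Lemma \ref{lemmaeq} gives $|D\mathcal{Z}_\lambda|\le\lambda Le^{x_3}\le\lambda Le^{1/L}<1$ a.e.\ and $|\mathcal{Z}_\lambda(x)|=\lambda e^{x_3}\le\lambda e^{1/L}<1/L$, the last two inequalities because $\lambda<\tfrac1{Le^{L}}\le\tfrac1{Le^{1/L}}$. Hence $\mathcal{Z}_\lambda$ maps this convex half-space into a bounded subset of itself and is a contraction there, so it has a unique fixed point $\xi$ in it, $\xi$ is attracting, and every orbit in $\{x_3\le 1/L\}$ converges to $\xi$. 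Writing $g(t)=\lambda e^{t}$ for the associated one-dimensional map, whose two positive fixed points $t_1<t_2$ (recall $\lambda<1/e$) satisfy $g^{\circ n}(s)\to t_1$ monotonically for every $s<t_2$, the bound $|\mathcal{Z}^{n+1}_\lambda(x)|=\lambda e^{p_3(\mathcal{Z}^{n}_\lambda(x))}\le g(|\mathcal{Z}^{n}_\lambda(x)|)$ shows that any orbit which ever meets the ball $\{|y|<t_2\}$ eventually enters $\{x_3\le 1/L\}$, and hence converges to $\xi$. Together with \eqref{eqfatou} this puts a large part of $\mathbb{R}^3$ in $QF(\mathcal{Z}_\lambda)$.

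\emph{The Julia side.} The set $\xi$ is disjoint from $\{|y|<t_2\}$ only after checking the complement: if an orbit never meets $\{|y|<t_2\}$ then $|\mathcal{Z}^{n}_\lambda(x)|\ge t_2$ for all $n$, which forces $p_3(\mathcal{Z}^{n}_\lambda(x))\ge\log(t_2/\lambda)=t_2$ for all $n$ (using $\lambda=t_2e^{-t_2}$). From $\lambda<\tfrac1{Le^{L}}$ one checks $t_2>L$ (the function $s\mapsto se^{-s}$ is decreasing on $[1,\infty)$ and $\tfrac1{Le^{L}}\le Le^{-L}$), so $\log(L/\lambda)=t_2-\log(t_2/L)<t_2$ and such an orbit stays in $\{x_3>\log(L/\lambda)\}$, where Lemma \ref{lemmaeq} gives $\ell(D\mathcal{Z}_\lambda)\ge\tfrac{\lambda e^{x_3}}{L}\ge\tfrac{t_2}{L}>1$: a uniform infinitesimal expansion, equivalently inverse branches of $\mathcal{Z}_\lambda$ through the beams $P(r_1,r_2)\times\mathbb{R}$ that contract by a factor $\le L/t_2<1$. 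Moreover any beam the orbit visits has $r_1+r_2$ even, since the odd beams lie in $QF$ by \eqref{eqfatou} while $\mathcal{J}(\mathcal{Z}_\lambda)$ is completely invariant; thus the itinerary $\Delta(x)=n_0n_1\cdots$ of Section \ref{section2} is well defined, and by the Fatou--Julia theory of \cite{B-Nicks} this $\{x:p_3(\mathcal{Z}^n_\lambda(x))\ge t_2\ \forall n\}$ is exactly $\mathcal{J}(\mathcal{Z}_\lambda)$. For a prescribed itinerary one pulls the beams $P(n_0)\times[t_2,\infty),\,P(n_1)\times[t_2,\infty),\dots$ back through the appropriate inverse branches to obtain nested closed sets $L_n$; their transverse (in $(x_1,x_2)$) diameters shrink geometrically by the expansion, while an elementary estimate — the image of $P(n_k)\times\{x_3\}$ under $\mathcal{Z}_\lambda$ is the upper hemisphere of radius $\lambda e^{x_3}$, so reaching the centre of $P(n_{k+1})$ forces $x_3\gtrsim\log(2|n_{k+1}|/\lambda)$ — pins down the ``admissible'' itineraries (those with $\bigcap_nL_n\neq\emptyset$) as the ones whose entries grow no faster than an $n$-fold iterated exponential. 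For each admissible itinerary $\bigcap_nL_n$ is a single curve tending to $\infty$ with $x_3\to\infty$ and $(x_1,x_2)$ confined to $P(n_0)$; these curves are pairwise disjoint, there are uncountably many of them (e.g.\ all bounded itineraries are admissible), their union is $\mathcal{J}(\mathcal{Z}_\lambda)$, and every point off the union lies in the basin of $\xi$ by the previous paragraph, which also gives the last assertion of Theorem \ref{berg1}.

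\emph{Main obstacle.} The delicate part is the last step: fixing the precise admissibility condition and proving that, for an admissible itinerary, $\bigcap_nL_n$ is genuinely a homeomorphic copy of $[0,\infty)$ rather than something larger — i.e.\ reproducing the graph-transform and uniqueness-of-the-hair arguments of \cite{bergk} with explicit constants. This is where $e^{-(\log L+L)}$ is used beyond the trivial $\lambda<1/e$: the factor $e^{L}$ (rather than, say, $e^{1/L}$) simultaneously forces $t_1<1/L$ (wait — more precisely it gives $\lambda e^{1/L}<1/L$ and $\lambda Le^{1/L}<1$, so $\xi$ is attracting), $t_2>L$ (so the whole escaping orbit sits in the funnel with a genuine expansion constant $t_2/L>1$), and the domination of the inverse-branch distortion — governed by the bi-Lipschitz constant $L$ of $h$ — by that expansion. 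The remaining points (existence and attractivity of $\xi$, the contraction on $\{x_3\le1/L\}$, and the bookkeeping of itineraries) are routine given Lemma \ref{lemmaeq}.
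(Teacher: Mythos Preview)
Your approach and the paper's are entirely different. The paper does not reprove any part of Bergweiler's construction; instead it observes that in \cite{bergk,B-Nicks} the result is already stated with an explicit sufficient condition in the additive parametrization $\mathscr{Z}_\kappa(x)=\mathcal{Z}(x)+(0,0,\kappa)$, namely $\kappa\le M_2-e^{M_1}$, where $M_1$ and $M_2$ are defined (for an arbitrary $\alpha\in(0,1)$) by the derivative conditions $x_3\ge M_1\Rightarrow\ell(D\mathcal{Z})\ge1/\alpha$ and $x_3\le M_2\Rightarrow|D\mathcal{Z}|\le\alpha$. Lemma \ref{lemmaeq} with $\lambda=1$ immediately gives $M_1=\log(L/\alpha)$ and $M_2=\log(\alpha/L)$; the conjugacy $\kappa=\log\lambda$ converts the condition to $\lambda\le\exp\bigl(\log(\alpha/L)-L/\alpha\bigr)$; and letting $\alpha\uparrow1$ yields the open bound $\lambda<e^{-(\log L+L)}$. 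That is the entire proof.

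Your route --- redoing the Fatou/Julia analysis and the hair construction with constants tracked throughout --- is legitimate in principle, and your preliminary estimates (contraction on $\{x_3\le1/L\}$, the inequality $t_2>L$, uniform expansion in the funnel $\{x_3\ge t_2\}$) are correct. But you have not actually closed the argument: your ``Main obstacle'' paragraph names the step that carries all the weight (that $\bigcap_nL_n$ is a single arc for each admissible itinerary, and that these arcs exhaust $\mathcal{J}(\mathcal{Z}_\lambda)$) and then defers it to \cite{bergk}. Since that is precisely the content of Theorem \ref{berg1}, what you have written is a plausibility sketch plus a pointer back to the same source the paper cites --- not an independent proof. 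The paper's approach buys the full result in a few lines by exploiting the explicit hypothesis $\kappa\le M_2-e^{M_1}$ already recorded in the cited references, whereas your approach would require several pages of quantitative graph-transform estimates that you have not supplied.
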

\begin{proof}
	First we note that in \cite{bergk,B-Nicks}  a different parametrization for the Zorich family is used, namely $\mathscr{Z}_{\kappa}(x)=\mathcal{Z}(x)+(0,0,\kappa$), $\kappa\in\mathbb{R}$. Moreover, Theorem \ref{berg1} is shown to hold for all $$\kappa\leq M_2-e^{M_1},$$ where $M_1:=M_1(\alpha)>0$ is such that $x_3\geq M_1 \hspace{2mm}\text{implies}\hspace{2mm} \ell\left(D\mathcal{Z}(x_1,x_2,x_3)\right)\geq \frac{1}{\alpha} $, $M_2:=M_2(\alpha)$ is such that $x_3\leq M_2$ implies that $\left|D\mathcal{Z}(x_1,x_2,x_3)\right|\leq \alpha$ and $\alpha$ is any number in $(0,1)$ (see \cite[(1.4),(1.5)]{bergk}). It is easy to see now using Lemma \ref{lemmaeq}, for $\lambda=1$, that we can take  $$M_1= \log\frac{L}{\alpha}\hspace{2mm} \text{and} \hspace{2mm}M_2= \log\frac{\alpha}{L}.$$

	Of course the two parametrizations are conjugate with $\kappa=\log \lambda$. Converting  the facts of the above paragraph in our setting we get that Theorem \ref{berg1} holds for $$0<\lambda\leq e^{M_2-e^{M_1}}=e^{\log\frac{\alpha}{L}-\frac{L}{\alpha}}.$$ Taking $\alpha\to 1$ we get that Theorem \ref{berg1} is true for all $0<\lambda<e^{-\left(\log L+L\right)}$.
\end{proof}

 Later we will also need the fact that the Zorich maps are expansive in a suitable upper half space. This is analogous to the fact that the exponential map is expansive in a right half plane. The next lemma makes this precise.
 
 \begin{lemma}\label{inverseexpansion}
 	For any $0<\lambda<e^{-\left(\log L+L\right)}$ there is an  $0<\alpha<1$ such that if $M=\log\frac{L}{\lambda\alpha}$ and
 	$\Lambda:H_{\geq M}\to P(r_1,r_2)$, for some $(r_1,r_2)\in\mathbb{Z}^2$,  is an inverse branch of the Zorich map $\mathcal{Z}_\lambda$ defined in $H_{\geq M}$, then $ \mathcal{Z}_\lambda(H_{\leq M})\subset H_{<M}$ and for all  $x,y\in H_{\geq M}$    \begin{equation}\label{eqlemma1}|\Lambda(x)-\Lambda(y)|\leq \alpha|x-y|.\end{equation} 
 \end{lemma}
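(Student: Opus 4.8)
The plan is to fix the contraction constant $\alpha$ first, read off $M$, and then convert an almost‑everywhere bound on the derivative of the inverse branch into the Lipschitz estimate \eqref{eqlemma1}. To choose $\alpha$, put $g(s)=\frac{L}{s}e^{-L/s}$ for $s\in(0,1]$. Then $g$ is continuous with $g(1)=Le^{-L}$, while the hypothesis $0<\lambda<e^{-(\log L+L)}=\frac{1}{L}e^{-L}\le Le^{-L}$ (recall $L\ge 1$) forces $\lambda<g(1)$; hence we may fix $\alpha\in(0,1)$ with $\lambda<g(\alpha)$. Set $M:=\log\frac{L}{\lambda\alpha}$. Taking logarithms, $\lambda<g(\alpha)$ says precisely that $M>\frac{L}{\alpha}$, and in particular $M>0$.

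Two facts now do the work. First, by Lemma \ref{lemmaeq}, for a.e.\ $x$ with $x_3\ge M$ we have $\ell\big(D\mathcal{Z}_\lambda(x)\big)\ge\frac{\lambda e^{x_3}}{L}\ge\frac{\lambda e^{M}}{L}=\frac{1}{\alpha}>1$. Second, since $h$ takes values on the unit sphere, $|\mathcal{Z}(x)|=e^{x_3}$ on $Q\times\mathbb{R}$, and the reflections used to extend $\mathcal{Z}$ to $\mathbb{R}^3$ (across vertical sides of the beam in the domain, and across the $x_1x_2$‑plane in the range) are norm preserving, so $|\mathcal{Z}_\lambda(x)|=\lambda e^{x_3}$ for every $x\in\mathbb{R}^3$. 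Consequently, if $x\in H_{\le M}$ then $|p_3(\mathcal{Z}_\lambda(x))|\le|\mathcal{Z}_\lambda(x)|=\lambda e^{x_3}\le\lambda e^{M}=\frac{L}{\alpha}<M$, which proves $\mathcal{Z}_\lambda(H_{\le M})\subset H_{<M}$. Taking contrapositives, $\mathcal{Z}_\lambda(z)\in H_{\ge M}$ implies $z\in H_{>M}$, so the inverse branch satisfies $\Lambda(H_{\ge M})\subset H_{>M}$.

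It remains to derive \eqref{eqlemma1}. The branch $\Lambda=\big(\mathcal{Z}_\lambda|_{P(r_1,r_2)\times\mathbb{R}}\big)^{-1}$ (necessarily with $r_1+r_2$ even, since $M>0$) is the inverse of a quasiregular homeomorphism, hence is itself quasiregular; thus $\Lambda\in W^1_{3,loc}$, $\Lambda$ is differentiable a.e., and, using that quasiregular maps and their inverses send null sets to null sets, at a.e.\ $z\in H_{\ge M}$ the map $\mathcal{Z}_\lambda$ is differentiable at $\Lambda(z)$ with nonvanishing Jacobian and $D\Lambda(z)=\big(D\mathcal{Z}_\lambda(\Lambda(z))\big)^{-1}$. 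By the two facts above, $\Lambda(z)\in H_{>M}$, and therefore $|D\Lambda(z)|=1/\ell\big(D\mathcal{Z}_\lambda(\Lambda(z))\big)\le\alpha$ for a.e.\ $z\in H_{\ge M}$. Since $H_{\ge M}$ is convex, for $x,y$ in its interior $H_{>M}$ the segment $\gamma(t)=x+t(y-x)$, $t\in[0,1]$, stays in $H_{>M}$; as $\Lambda$ is continuous on $H_{\ge M}$ and lies in $W^1_{1,loc}(H_{>M})$ with $|D\Lambda|\le\alpha$ a.e., a standard mollification argument (apply the fundamental theorem of calculus to the smooth approximants, whose derivatives still obey the bound, and pass to the limit) gives $|\Lambda(x)-\Lambda(y)|\le\alpha|x-y|$. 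Extending by the continuity of $\Lambda$ on $H_{\ge M}$ yields \eqref{eqlemma1} for all $x,y\in H_{\ge M}$.

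The only genuinely delicate step is this last one: a quasiregular map need only be absolutely continuous on almost every line, so one cannot simply integrate $D\Lambda$ along the specific segment $[x,y]$. This is circumvented by combining the convexity of $H_{\ge M}$ with the Sobolev regularity of $\Lambda$ (equivalently, by a chaining argument over finitely many small balls covering $[x,y]$, on each of which $\Lambda$ is $\alpha$‑Lipschitz). Everything else is bookkeeping with the explicit value of $M$ and the derivative bounds furnished by Lemma \ref{lemmaeq}.
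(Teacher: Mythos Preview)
Your argument is correct and follows essentially the same route as the paper: choose $\alpha\in(0,1)$ by continuity so that $\lambda<\frac{L}{\alpha}e^{-L/\alpha}$ (equivalently $M>L/\alpha$), use Lemma~\ref{lemmaeq} to get $\ell(D\mathcal{Z}_\lambda)\ge 1/\alpha$ on $H_{\ge M}$, verify $\mathcal{Z}_\lambda(H_{\le M})\subset H_{<M}$ by bounding the third coordinate of the image by $L/\alpha<M$, conclude $\Lambda(H_{\ge M})\subset H_{\ge M}$, and convert the a.e.\ derivative bound $|D\Lambda|\le\alpha$ into the Lipschitz estimate. The only cosmetic differences are that you bound $p_3(\mathcal{Z}_\lambda(x))$ via $|\mathcal{Z}_\lambda(x)|=\lambda e^{x_3}$ rather than directly, and that for the final step the paper simply invokes the finite increment theorem (using that $\mathcal{Z}_\lambda$, and hence $\Lambda$, is locally Lipschitz) whereas you go through Sobolev regularity and mollification; your treatment of the a.e.\ issue is in fact more carefully spelled out than the paper's.
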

\begin{proof}
		First note that \[D\Lambda(x)=D\mathcal{Z}_\lambda(\Lambda(x))^{-1}.\] 
		The Zorich map is absolutely continuous on any line segment since it is  locally Lipschitz. Using the fundamental theorem of calculus for the Lebesgue integral now, it is not too hard to prove  that a version of  the finite increment theorem, see \cite[10.4.1, Theorem 1]{Zorich2019}, is true for such functions. In other words, if $\gamma$ is the segment that connects $x$ and $y$ then
	\begin{equation}\label{eq10}|\Lambda(x)-\Lambda(y)|\leq\esssup_{z\in \gamma}|D\Lambda(z)||x-y| .\end{equation}
		
		It is true that \begin{equation}\label{eq12}\esssup_{z\in \gamma}|D\Lambda(z)|\leq \frac{1}{\essinf_{z\in\gamma}\ell\left(D\mathcal{Z}_\lambda(\Lambda(z))\right)}.\end{equation}
		By Lemma \ref{lemmaeq} we have that 
		 \begin{equation}\label{eqell}\ell\left(D\mathcal{Z}_{\lambda}(\Lambda(x))\right)\geq \frac{e^{p_3(\Lambda(x))}\lambda}{L}\hspace{2mm}\text{a.e.}\end{equation}

		 Hence for any $0<\alpha<1$ and for all $x$ such that \begin{equation}\label{eqexpansion}p_3(\Lambda(x))\geq \log \frac{L}{\lambda\alpha}\end{equation} 
		 we have  $\ell(D\mathcal{Z}_\lambda(\Lambda(x)))\geq1/\alpha$.
		 
		 We now claim that for  $M=\log\frac{L}{\lambda\alpha}$ we have that for all $x\in H_{\geq M}$ equation \eqref{eqexpansion} holds. Indeed if $x=(x_1,x_2,x_3)\in H_{\leq M}$ we have that

		  \begin{equation}\label{eqhalf}p_3\left(\mathcal{Z}_\lambda(x)\right)=p_3\left(\lambda e^{x_3}h(x_1,x_2)\right)\leq \lambda\frac{L}{\lambda\alpha}=\frac{L}{\alpha}.\end{equation} 
		 Notice now that by continuity we can choose $0<\alpha<1$ so that \[\lambda< e^{-\left(\log L+L\right)}=\frac{e^{-L}}{L}\leq \frac{e^{-\frac{L}{\alpha}}L}{\alpha},\]
		 which after rearranging and using \eqref{eqhalf} implies that 
		 \[p_3\left(\mathcal{Z}_\lambda(x)\right)<\log \frac{L}{\lambda\alpha}.\]
		  Hence $ \mathcal{Z}_\lambda(H_{\leq M})\subset H_{<M}$ as we wanted. Moreover, this implies that whenever $x\in H_{\geq M}$ we have that $\Lambda(x)\in H_{\geq M}$ so that \eqref{eqexpansion} is true.

		  Putting everything together we have that for $x\in H_{\geq M}$ \[|D\Lambda(x)|\leq\frac{1}{\ell(D\mathcal{Z}_\lambda(\Lambda(x)))}\leq\alpha\hspace{2mm}\text{a.e.} \] Combined with (\ref{eq10}) and (\ref{eq12}) this gives us what we wanted.
	\end{proof}

The next lemma introduces a number $p_\lambda$ which is going to play an important role in the subsequent sections.

\begin{lemma}\label{pl-lemma}
	Let $0<\lambda<e^{-\left(\log L+L\right)}$ and $\alpha$, $M$ be as in Lemma \ref{inverseexpansion}. Then there is a number $p_\lambda>1$ such that \eqref{eqlemma1} holds for all $x,y\in H_{\geq p_\lambda}$ and such that all planes $x_3=c$, where $c>p_\lambda+1$, intersect the Julia set $\mathcal{J}(\mathcal{Z_\lambda})$.
\end{lemma}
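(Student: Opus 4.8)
The plan is to treat the two assertions of the lemma separately, since the first one is nearly immediate from what has already been established. Lemma \ref{inverseexpansion} says that inequality \eqref{eqlemma1} holds for \emph{all} $x,y\in H_{\geq M}$, where $M=\log\frac{L}{\lambda\alpha}$; hence any number $p_\lambda\geq M$ automatically satisfies the first requirement, because $H_{\geq p_\lambda}\subseteq H_{\geq M}$. So the real work is to arrange, by possibly enlarging $p_\lambda$, that every horizontal plane lying above height $p_\lambda+1$ meets $\mathcal{J}(\mathcal{Z}_\lambda)$, while keeping $p_\lambda>1$.

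For the plane-intersection statement I would use Theorem \ref{berg1}, which applies in our parameter range by Lemma \ref{lemmalam}. Fix any one of the hairs $H$ making up $\mathcal{J}(\mathcal{Z}_\lambda)$, with homeomorphic parametrisation $\gamma\colon[0,\infty)\to H$, $\gamma(t)\to\infty$. By Theorem \ref{berg1} the third coordinate along the hair tends to infinity, so $g:=p_3\circ\gamma$ is a continuous real function on $[0,\infty)$ with $g(t)\to\infty$. Consequently $m:=\inf_{t\ge 0}g(t)$ is finite and is attained (on a compact initial segment outside of which $g$ is already large), and the image $g([0,\infty))$, being a connected unbounded-above subset of $\mathbb{R}$ that contains $m$, equals $[m,\infty)$. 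Thus every plane $\{x_3=c\}$ with $c\ge m$ already meets this single hair, hence meets $\mathcal{J}(\mathcal{Z}_\lambda)$.

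It then suffices to set $p_\lambda:=\max\{M,\,m-1\}$. One has $p_\lambda\geq M>1$ (the inequality $M>1$ follows from $0<\alpha<1$, $\lambda<e^{-(\log L+L)}=1/(Le^L)$ and $L\ge1$, which give $L/(\lambda\alpha)>L^{2}e^{L}\ge e$); inequality \eqref{eqlemma1} holds on $H_{\geq p_\lambda}\subseteq H_{\geq M}$ by Lemma \ref{inverseexpansion}; and if $c>p_\lambda+1\ge m$ then $c\ge m$, so $\{x_3=c\}$ intersects the Julia set by the previous paragraph. I do not expect a genuine obstacle in this lemma: both conditions merely demand that $p_\lambda$ be ``sufficiently large'', so the only points needing care are the elementary estimate $M>1$ and the observation — via the intermediate value theorem applied to $p_3$ restricted to a hair — that one hair alone already sweeps out every sufficiently high horizontal plane.
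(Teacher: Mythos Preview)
Your proof is correct and follows essentially the same line as the paper: both set $p_\lambda=\max\{M,\;\text{(height threshold)}-1\}$, verify $p_\lambda\geq M>1$ via the estimate $M=\log\tfrac{L}{\lambda\alpha}\geq 2\log L+L-\log\alpha>1$, and invoke Theorem~\ref{berg1} for the plane-intersection property. The only difference is cosmetic: the paper speaks of the minimal height $J$ above which every plane meets $\mathcal{J}(\mathcal{Z}_\lambda)$, whereas you make this explicit by applying the intermediate value theorem to $p_3$ along a single hair, which is in fact a cleaner justification of that step.
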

\begin{proof}

From Lemma \ref{inverseexpansion} we know  that $\mathcal{Z}_\lambda(H_{\leq M})\subset H_{<M}$ so that $H_{\leq M}$ is in the quasi-Fatou set. Moreover,  Theorem \ref{berg1} implies that there is  a minimum number $J$ such that for all $c\geq J$ the plane $\{(x_1,x_2,x_3):x_3=c\}$ intersects the Julia set $\mathcal{J}(\mathcal{Z}_\lambda)$ and thus $J\geq M$. Also all the points in $H_{<J}$ are in the quasi-Fatou set and converge to a fixed point.

  We set 
\begin{equation*}\label{eq006}
p_\lambda=\max\{J-1,M\}.
\end{equation*}
Also notice that  \begin{equation*}\label{eqpl}p_\lambda\geq M=\log\frac{L}{\lambda\alpha}\geq \log\frac{L}{e^{-(\log L+ L)}\alpha}\geq 2\log L+L-\log \alpha>1,\end{equation*}
where in the last inequality we used the fact that $L> 1$ and $0<\alpha<1$.
\end{proof}

\section{Proving Theorem \ref{brush}}\label{section3}
Our proof will closely follow that of the corresponding result for the exponential map which is due to Aarts and Oversteegen (see \cite[Theorem 1.4]{Aarts1993}).
\subsection{Construction of the 3-d straight brush }
First we will need to define a correspondence between the set $ \left(\mathbb{R}\setminus\mathbb{Q}\right)^2$ and $\prod_{i=0}^{\infty}\mathbb{Z}\times \mathbb{Z}$. This can be done in many ways but let us mention here a method used by Devaney based on Farey trees. In \cite[Section 5.3]{Devaney2010} Devaney finds for every irrational number $\zeta$ a sequence of integers $n_0n_1n_2\dots$ by doing the following. We break the real line into intervals $I_k=(k,k+1)$ for each $k\in\mathbb{Z}$. Then we further subdivide each $I_k$ in intervals $I_{kl}$, $l\in\mathbb{Z}$ in a certain way and so on. Specifically, assuming that $I_{n_0n_1\dots n_k}$ has been defined we define $I_{n_0\dots n_k j} $ as follows.
Let $$I_{n_0n_1\dots n_k}=\left(\frac{a}{b},\frac{c}{d}\right)$$ and $\frac{p_0}{q_0}=\frac{a+c}{b+d}$. We then define $\frac{p_n}{q_n}=\frac{p_{n-1}+c}{q_{n-1}+d}$ and $\frac{p_{-n}}{q_{-n}}=\frac{p_{-n+1}+a}{q_{-n+1}+b}$, for all $n\in\mathbb{N}$. Finally, define \[I_{n_0\dots n_k j}=\left(\frac{p_j}{q_j},\frac{p_{j+1}}{q_{j+1}}\right).\]

  We refer to \cite{Devaney2010} for more details but the whole construction implies that the intervals we get satisfy the following.\begin{enumerate}
	\item $I_{n_0n_1\dots n_{k+1}}\subset I_{n_0n_1\dots n_k}.$
	\item The endpoints of each interval $I_{n_0n_1\dots n_k}$ are rational.
	\item $\{\zeta\}=\bigcap_{k=1}^{\infty} I_{n_0n_1\dots n_k}$ and every irrational can be obtained this way.
\end{enumerate}  
In our case for any pair of irrational numbers $(a_1,a_2)$  we can apply this method twice and get a pair of sequences $\{(n_{k,1},n_{k,2})\}_{k\in\mathbb{N}}\in \prod_{i=0}^{\infty}\mathbb{Z}\times \mathbb{Z}$. We also equip $\prod_{i=0}^{\infty}\mathbb{Z}\times \mathbb{Z}$ with the product topology of the spaces $\mathbb{Z}\times \mathbb{Z}$ equipped with the discrete topology. The correspondence between  $ \left(\mathbb{R}\setminus\mathbb{Q}\right)^2$ and $\prod_{i=0}^{\infty}\mathbb{Z}\times \mathbb{Z}$ can be shown to be a homeomorphism.

Let now $x\in[0,\infty)$ and $(s_1,s_2)\in\mathbb{Z}^2$ with $s_1+s_2=\text{even}$. We define the cubes \[S(x,s_1,s_2):=\{(x_1,x_2,x_3)\in\mathbb{R}^3:x\leq x_3\leq x+1, (x_1,x_2)\in \overline{P(s_1,s_2)}\}.\]
Note that because the Zorich map is doubly periodic with periods $(4,0,0)$, $(0,4,0)$ and because $\mathcal{Z}\circ R=\mathcal{Z}$, where $R$ is a half-turn around the lines $x_1=2n+1, x_2=2m+1$, $n,m\in\mathbb{Z}$ the Julia set will be periodic and invariant under $R$ as well. Also note that by \eqref{eqfatou} the Julia set lies in the square beams $P(s_1,s_2)\times\mathbb{R}$ with $s_1+s_2=$ even and thus   when $s_1+s_2$ is even

\begin{equation}\label{juliasquare}S(x,s_1,s_2)\cap \mathcal{J}(\mathcal{Z}_\lambda)\not=\emptyset,\hspace{1mm}\text{for all}\hspace{2mm} x\geq p_\lambda,\end{equation}
  where $p_\lambda$ was defined in Lemma \ref{pl-lemma}.

Let us now construct the 3-d Straight Brush $B$ of Theorem \ref{brush} which will be homeomorphic to our Julia set. We will suitably modify the construction done in \cite{Aarts1993}. Let $x\geq p_\lambda$ and $(a_1,a_2)\in\left(\mathbb{R}\setminus\mathbb{Q}\right)^2$ with the corresponding sequence  $\{(n_{k,1},n_{k,2})\}_{k\in\mathbb{N}\cup\{0\}}$. Set $x_0=x$ and $R_0(x,a_1,a_2)=S(x,n_{0,1},n_{0,2})$. By induction on $k$  now we define $R_k=R_k(x,a_1,a_2)$ and $x_k$. We consider now two cases:
\begin{enumerate}[label=(\roman*)]
	\item $R_k\not=\emptyset$ and there is a $\xi$ with \begin{equation}\label{eq1}S(\xi,n_{k+1,1},n_{k+1,2})\subset \mathcal{Z}_{\lambda}\left(R_k\right).\end{equation}
	Let $\xi'=\min\{\xi:\xi \hspace{1mm}\text{satisfies}\hspace{1mm}(\ref{eq1})\}$. If $\xi'\geq p_\lambda$, we set $x_{k+1}=\xi'$ and \[R_{k+1}=S(\xi',n_{k+1,1},n_{k+1,2}).\]
	If $\xi'< p_\lambda$, we set $x_{k+1}=x_k$ and $R_{k+1}=\emptyset$.
	\item If the  case (i) does not apply, we set $x_{k+1}=x_k$ and $R_{k+1}=\emptyset$.
	\end{enumerate}

 For each $k\in \mathbb{N}$ now we set $$B_k=B_k(x):=\{y\in R_0:\mathcal{Z}^j_\lambda(y)\in R_j, \hspace{2mm}\text{for}\hspace{2mm}1\leq j\leq k\}.$$ The 3-d straight brush is then defined as \[B:=\{(x,a_1,a_2):R_k\not=\emptyset, \hspace{1mm}\text{for all}\hspace{1mm}k\in\mathbb{N}\}.\]
 
 The proof that this construction defines a 3-d straight brush and that this is homeomorphic to the Julia set will be now split in several lemmas. First we need to show that the set $B$ we defined is indeed a 3-d straight brush. 
\begin{lemma}
$B$ is a 3-d straight brush.
\end{lemma}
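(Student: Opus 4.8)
The plan is to verify the three defining properties of a 3-d straight brush --- hairiness, density, and compact sections --- for the set $B$ constructed above, mirroring the two-dimensional argument of Aarts and Oversteegen but keeping careful track of the two independent "horizontal" directions. First I would record the basic monotonicity built into the construction: if $(x,a_1,a_2)\in B$ and $x'\geq x$, then running the same inductive procedure starting from $x_0'=x'$ produces cubes $R_k'$ with $x_k'\geq x_k$ and $R_k'\neq\emptyset$ whenever $R_k\neq\emptyset$, because the minimal $\xi$ satisfying \eqref{eq1} depends monotonically on the base level of $R_k$ (a larger source cube has image containing a cube based higher up). This gives the threshold $t_{(a_1,a_2)}=\inf\{x\geq p_\lambda:(x,a_1,a_2)\in B\}\in[p_\lambda,\infty]$ and hairiness, once one checks that the infimum is attained --- which follows from the closedness argument below. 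One should also note that since the construction only ever places brush points with $x\geq p_\lambda>1$ and with $(a_1,a_2)$ irrational (the correspondence with $\prod\mathbb{Z}^2$ uses irrationals), $B$ does sit inside the prescribed ambient set $\{(y,a_1,a_2):y\geq 0,\ (a_1,a_2)\in(\mathbb{R}\setminus\mathbb{Q})^2\}$.

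Next I would treat compact sections, i.e. that $B$ is closed in $\mathbb{R}^3$. Suppose $(x^{(m)},a_1^{(m)},a_2^{(m)})\to(x,a_1,a_2)$ with each point in $B$; I must show the limit lies in $B$, and in particular that $(a_1,a_2)$ is still irrational. Here is where \eqref{juliasquare} and Lemma~\ref{pl-lemma} enter: by Theorem~\ref{brush}'s intended homeomorphism (which we are building), or more directly by the construction, a point of $B$ over $(a_1,a_2)$ with threshold $t_{(a_1,a_2)}\leq x$ corresponds to a point of $\mathcal{J}(\mathcal{Z}_\lambda)$ with prescribed itinerary $\{n_k\}$, and the expansion Lemma~\ref{inverseexpansion} (valid on $H_{\geq p_\lambda}$) forces each such point to be unique given its itinerary, with the sets $B_k(x)$ shrinking to a single point as $k\to\infty$. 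Thus I would argue: the itineraries $\{n_k^{(m)}\}$ converge coordinatewise in $\prod\mathbb{Z}^2$ (for large $m$ the first $k$ symbols stabilise because the Farey correspondence is a homeomorphism and $(a_1^{(m)},a_2^{(m)})\to(a_1,a_2)$ with $(a_1,a_2)$ lying in a fixed Farey cell boundary only if rational), and the corresponding cube towers $R_k^{(m)}$ converge; since each tower is nonempty and their nonemptiness is an open condition in the base level $x$ away from the discrete set of "critical" base levels, the limit tower is nonempty, giving $(x,a_1,a_2)\in B$. The irrationality of the limit is the delicate point and I would handle it by showing that rational limits would force the $x^{(m)}$ to escape to $\infty$, contradicting $x^{(m)}\to x<\infty$.

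For density I would exploit the double periodicity and the $R$-symmetry of the Julia set together with \eqref{juliasquare}: for every $(s_1,s_2)$ with $s_1+s_2$ even and every $x\geq p_\lambda$, the cube $S(x,s_1,s_2)$ meets $\mathcal{J}(\mathcal{Z}_\lambda)$, and pulling back finitely many such constraints along inverse branches --- which are contractions by Lemma~\ref{inverseexpansion} --- shows that for any prescribed finite initial itinerary $n_0\dots n_k$ there is a brush point realising it with base level bounded independently of $k$. Hence the set of $(a_1,a_2)$ with $t_{(a_1,a_2)}<\infty$ is dense in $(\mathbb{R}\setminus\mathbb{Q})^2$ (it contains all irrationals whose Farey expansions match arbitrarily long admissible blocks). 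For the finer density clause --- the existence of the four monotone sequences $a_{n,2}\uparrow a_2$, $b_{n,2}\downarrow a_2$, $c_{n,1}\uparrow a_1$, $d_{n,1}\downarrow a_1$ with $t$ converging along them --- I would perturb the last symbol $n_{k,2}$ (resp. $n_{k,1}$) of a long initial block in one direction, which shifts $a_2$ (resp. $a_1$) monotonically by the nested-interval property of the Farey subdivision, while the continuity of the inverse-branch construction, again via the contraction estimate, forces the thresholds to converge to $t_{(a_1,a_2)}$. The main obstacle, I expect, is precisely this last point: proving that $t$ varies continuously along these approximating sequences, which requires quantitative control --- uniform on $H_{\geq p_\lambda}$ --- of how the minimal admissible base level $\xi'$ in step (i) of the construction depends on both the source cube and the target symbol, and this is where the expansion estimates of Lemmas~\ref{inverseexpansion} and~\ref{pl-lemma} must be pushed to give a genuine modulus of continuity rather than mere nonemptiness.
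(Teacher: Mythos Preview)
Your three-part structure matches the paper's, and your hairiness argument is essentially identical. The substantive divergences are in Compact Sections and in the convergence clause of Density, and in one place your plan is genuinely weaker than the paper's.

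\textbf{Compact Sections.} You argue sequential closedness; the paper instead shows $B^\complement$ is open by exhibiting, around any $(x,a_1,a_2)\notin B$, an explicit open box disjoint from $B$. For the hard case $a_1\in\mathbb{Q}$, the paper observes that $a_1$ sits on the boundary of Farey intervals $I_{n_0\dots n_k}$ and $I_{n_0\dots (n_k+1)}$, so any nearby irrational $\gamma_1$ has its $(k{+}1)$st Farey digit large in absolute value; choosing $N$ so large that $\mathcal{Z}_\lambda(R_k(x,\cdot,\cdot))$ cannot contain a cube over $P(n_{k+1})$ with $|n_{k+1,1}|\geq N$ gives $R_{k+1}=\emptyset$ uniformly for $(y,\gamma_1,\gamma_2)$ in a box. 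Your route can be made to work, but as written it has two soft spots. First, invoking ``Theorem~\ref{brush}'s intended homeomorphism'' is circular; you must argue purely from the cube construction. Second, the claim that ``nonemptiness of the tower is an open condition in $x$ away from a discrete set of critical base levels'' is not justified and is not obviously true for the full infinite tower: what is true (and what the paper uses) is that \emph{emptiness} of $R_{k+1}$ for a fixed finite $k$ is open in $(x,a_1,a_2)$. Your claim that rational limits force $x^{(m)}\to\infty$ is the correct contrapositive of the paper's argument, but you have not supplied the mechanism --- it is exactly the Farey-digit blow-up just described.

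\textbf{Density.} For $\{(a_1,a_2):t_{(a_1,a_2)}<\infty\}$ dense, the paper simply notes that periodic itineraries are dense in $\prod\mathbb{Z}^2$ and that for a periodic itinerary one can choose $x$ so large that $x_k$ is increasing through one period, hence for all $k$. Your pull-back argument via Lemma~\ref{inverseexpansion} also works but is heavier. More importantly, you overestimate the difficulty of the convergence $t_{(a_1,a_{n,2})}\to t_{(a_1,a_2)}$. The paper gets it almost for free: taking $x=t_{(a_1,a_2)}$ and $y_i\downarrow x$, one perturbs the $i$th digit by $\pm1$ to produce $(y_i,a_1,a_{i,2})\in B$, giving $\limsup t_{(a_1,a_{i,2})}\leq t_{(a_1,a_2)}$; the reverse inequality is immediate from closedness of $B$ (if $t_{(a_1,a_{i,2})}$ accumulated below $t_{(a_1,a_2)}$, the limit point would lie in $B$ with first coordinate below the threshold). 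No quantitative modulus of continuity is needed --- Compact Sections, proved first, does the work.
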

\begin{proof}First we prove hairiness.\\
	
	\textbf{\underline{Hairiness.}}\\Let $(x,a_1,a_2)\in B$ with $\{(n_{k,1},n_{k,2})\}_{k\in\mathbb{N}}$ the corresponding sequence of $(a_1,a_2)$. Suppose that $x<y$ then $(y,a_1,a_2)\in B$. Indeed, by induction $x_k<y_k$ and $R_k(y,a_1,a_2)\not=\emptyset$ for all $k\in\mathbb{N}$. Hence $(y,a_1,a_2)\in B$. Suppose now that $(z,a_1,a_2)\not\in B$. Consider the smallest $k$ with $R_k(z,a_1,a_2)\not=\emptyset$ but $R_{k+1}(z,a_1,a_2)=\emptyset$. Then either there is no $\xi$ with \[S(\xi,n_{k+1,1},n_{k+1,2})\subset \mathcal{Z}_\lambda(R_k(z,a_1,a_2))\] or there is such a $\xi$ but for the minimal $\xi_m$ we have that $\xi_m<p_\lambda$. In both cases the same holds for all $y$ slightly larger than $z$. 
	
	Combining now what we have proven we see that the set $\{t:(t,a_1,a_2)\in B\}$ is of the form $[t_{(a_1,a_2)},\infty)$.
	
	Secondly, we prove that $B$ is closed.\\\textbf{\underline{Compact Sections.}} \\ 
	Let $(x,a_1,a_2)\in B^{\complement}$. We will show that $B^{\complement}$ is an open set by constructing an open box $S$ such that $(x,a_1,a_2)\in S\subset \mathbb{R}^3\setminus B$. The case where $(a_1,a_2)\not \in \left(\mathbb{R}\setminus\mathbb{Q}\right)^2$ is the hardest one and the one we will prove here.

	Without loss of generality suppose that $a_1\in \mathbb{Q}$. Consider the minimal $k\in\mathbb{N}$ such that \[(a_1,a_2)\not \in \bigcup_{i,j\in\mathbb{N}} I_{n_0\dots n_{k-1}i}\times J_{m_0\dots m_{k-1}j},\] where $I_{n_0\dots n_k}$ and $J_{m_0\dots m_k}$ are the intervals we get using the method we described at the start of this section. There is a  $n_k$ such that for all $m\in\mathbb{Z}$, if we denote by $\zeta$ and $\zeta'$ the irrationals that correspond to the sequences $(n_0,\dots,n_k,m,\dots)$ and $(n_0,\dots,n_k+1,m,\dots) $ respectively then \begin{equation}\label{eq007}\zeta<a_1<\zeta'.\end{equation}
	We can now choose $N$ large enough so that no point $(x,\gamma_1,\gamma_2)$ belongs to $B$, where $\gamma_1$  satisfies \begin{equation}\label{eq15}(n_0,\dots,n_k,N,\dots)<\gamma_1<(n_0,\dots,n_k+1,-N,\dots),\end{equation}  $\gamma_2\in\mathbb{R}$ and we have abused the notation in the obvious way. The reason we can do this is because  for such $(\gamma_1,\gamma_2)$ we have that $R_{k+1}(x,\gamma_1,\gamma_2)=\emptyset$. Notice that by \eqref{eq007} we have that $a_1$ satisfies \eqref{eq15}.  It follows that there is a $\delta>0$ such that for all $y$ with $|y-x|<\delta$ we have that $R_{k+1}(y,\gamma_1,\gamma_2)=\emptyset$, for all $\gamma_1$ satisfying equations (\ref{eq15}) and $\gamma_2\in\mathbb{R}$.  
	
		Finally we prove density.
	\\\textbf{\underline{Density.}} \\We want to show that 
	\[
	A:=\{(a_1,a_2):t_{a_1,a_2}<\infty\}
	\]
	is dense in $\left(\mathbb{R}\setminus\mathbb{Q}\right)^2$. It is enough to show that the corresponding sequences in $\prod_{i=0}^{\infty}\mathbb{Z}^2$ of such points are dense in the space of all sequences. We know that the periodic sequences, meaning sequences with $(n_{k,1},n_{k,2})=(n_{k+N,1},n_{k+N,2})$, for some $N\in\mathbb{N}$ and all $k\in\mathbb{N}$, are dense in the space of all sequences. Hence, it is enough to prove that periodic sequences correspond to points in $A$. Indeed, for any periodic sequence it not hard to see that when $x$ is large enough the sequence $x_k$ is increasing and $R_k(x,a_1,a_2)\not=\emptyset$ for $k\leq N+1$ and as a result, since the sequence $(n_{k,1},n_{k,2})$ is periodic, for all $k\in\mathbb{N}$. This implies that $t_{(a_1,a_2)}<\infty$ for that $(a_1,a_2)$.
	
	Suppose now that $(x,a_1,a_2)\in B$ and $(a_1,a_2)$ has a corresponding sequence $\{(n_{k,1},n_{k,2})\}_{k\in\mathbb{N}}$. Choose $y_i$ with $x<y_i<x+\frac{1}{i}$, for all $i\in\mathbb{N}$. Then for all $i$ the inner radius of the shell $\mathcal{Z}_\lambda(R_{k-1}(y_i,a_1,a_2))$ will be much larger than the inner radius of $\mathcal{Z}_\lambda(R_{k-1}(x,a_1,a_2))$, for $k$ large enough. It follows that $(y_i,a_{1},a_{i,2})\in B$ where $(a_{1},a_{i,2})$ is chosen so that it has a corresponding sequence $$(n_{0,1},n_{0,2}),\dots (n_{i,1}, n_{i,2}-1),(n_{i+1,1}, n_{i+1,2}),(n_{i+2,1}, n_{i+2,2}),\dots$$ and thus $a_{i,2}$ is an increasing sequence. Hence,  $y_i\to x$ while $a_{i,2}\uparrow a_2$ as $i\to \infty$. Note that closedness of $B$ implies now that $t_{(a_1,a_{n,2})}\to t_{(a_1,a_2)}$.
	
	Similarly we will have that $(y_i,a_1,b_{i,2})$, $(y_i,c_{i,1},a_{2})$, $(y_i,d_{i,1},a_{2})$$\in B$, where $(a_{1},b_{i,2})$, $(c_{i,1},a_{2})$, $(d_{i,1},a_{2})$ have  corresponding sequences \[(n_{0,1},n_{0,2}),\dots (n_{i,1}, n_{i,2}+1), (n_{i+1,1}, n_{i+1,2}),(n_{i+2,1}, n_{i+2,2}),\dots,\]\[(n_{0,1},n_{0,2}),\dots (n_{i,1}-1, n_{i,2}), (n_{i+1,1}, n_{i+1,2}),(n_{i+2,1}, n_{i+2,2}),\dots,\]\[(n_{0,1},n_{0,2}),\dots (n_{i,1}+1, n_{i,2}), (n_{i+1,1}, n_{i+1,2}),(n_{i+2,1}, n_{i+2,2}),\dots\] respectively and $b_{i,2}$ is decreasing, $c_{i,1}$ is increasing and $d_{i,1}$ is decreasing.

\end{proof}

Next comes the construction of a suitable homemorphism $\varphi$ from $B$ to the Julia set $\mathcal{J}(\mathcal{Z}_\lambda)$.

\subsection{Proving that $B$ is homeomorphic to $\mathcal{J}(\mathcal{Z}_\lambda)$}
For each $(x,a_1,a_2)\in B$ we define \[\varphi(x,a_1,a_2):=\bigcap_{k=0}^{\infty}B_k.\]
Note that $B_k\subset\dots\subset B_0$, for all $k\in\mathbb{N}$ and also that $\diam(B_k)\to 0$ since all boxes $R_k(x,a_1,a_2)$  are inside the half space  $\{(x_1,x_2,x_3):x_3\geq p_\lambda\}$ on which, by Lemma \ref{inverseexpansion} and by Lemma \ref{pl-lemma},  the Zorich map is expanding. Thus \[\diam(B_k)=\sup_{z,y\in R_k(x,a_1,a_2)} \left|\Lambda^k(z)-\Lambda^k(y)\right|\leq \alpha^k\diam(R_k(x,a_1,a_2)).\]

\begin{lemma}
	$\varphi$ is an injective continuous map from $B$ into the Julia set $\mathcal{J}(\mathcal{Z}_\lambda)$.
\end{lemma}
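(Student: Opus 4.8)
The plan is to check, in this order, that $\varphi$ is well defined with image inside $\mathcal{J}(\mathcal{Z}_\lambda)$, that it is continuous, and that it is injective; the last of these is where the real work lies.

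\emph{Well-definedness and image.} For $(x,a_1,a_2)\in B$ the sets $B_k$ form a nested family of non-empty compact sets: by \eqref{eq1} the $k$-fold composition $\Lambda^k$ of inverse branches of $\mathcal{Z}_\lambda$ along the beams $P(n_0)\times\mathbb{R},\dots,P(n_{k-1})\times\mathbb{R}$ is defined on $R_k$, and $B_k=\Lambda^k(R_k)$ is the image of the closed box $R_k\neq\emptyset$. Since $\diam B_k\leq\alpha^k\diam R_k\leq 3\alpha^k\to0$ (the bound noted just before the statement, using Lemmas \ref{inverseexpansion} and \ref{pl-lemma}), $\varphi(x,a_1,a_2)=\bigcap_kB_k$ is a single point. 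To place it in the Julia set I would argue that every box $R_k=S(x_k,n_k)$ has $x_k\geq p_\lambda$, so $R_k\cap\mathcal{J}(\mathcal{Z}_\lambda)\neq\emptyset$ by \eqref{juliasquare}; by complete invariance of $\mathcal{J}(\mathcal{Z}_\lambda)$ and since $\Lambda^k$ is a branch of $\mathcal{Z}_\lambda^{-k}$, the set $B_k\cap\mathcal{J}(\mathcal{Z}_\lambda)=\Lambda^k\!\big(R_k\cap\mathcal{J}(\mathcal{Z}_\lambda)\big)$ is a non-empty compact set, and these sets are nested, so their intersection is a non-empty subset of $\bigcap_kB_k=\{\varphi(x,a_1,a_2)\}$; hence $\varphi(x,a_1,a_2)\in\mathcal{J}(\mathcal{Z}_\lambda)$.

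\emph{Continuity.} Suppose $(x^{(i)},a_1^{(i)},a_2^{(i)})\to(x,a_1,a_2)$ in $B$. Under the homeomorphism between $(\mathbb{R}\setminus\mathbb{Q})^2$ and $\prod_{i=0}^\infty\mathbb{Z}\times\mathbb{Z}$, for each fixed $k$ the itineraries of the $(a_1^{(i)},a_2^{(i)})$ eventually agree with that of $(a_1,a_2)$ through index $k$; combined with $x^{(i)}\to x$ and the (routine) continuous dependence of the number $\xi'$ on the box $R_k$ that produces it, this gives $R_j(x^{(i)},a^{(i)})\to R_j(x,a)$ in the Hausdorff metric for all $j\leq k$, and hence $B_k(x^{(i)},a^{(i)})=\Lambda^k\!\big(R_k(x^{(i)},a^{(i)})\big)\to B_k(x,a)$, since for large $i$ the same $\alpha^k$-Lipschitz map $\Lambda^k$ is used in both cases. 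Given $\varepsilon>0$ choose $k$ with $3\alpha^k<\varepsilon/2$; then for $i$ large the Hausdorff distance from $B_k(x^{(i)},a^{(i)})$ to $B_k(x,a)$ is $<\varepsilon/2$, while each of these sets has diameter $<\varepsilon/2$ and contains the corresponding value of $\varphi$, so $|\varphi(x^{(i)},a^{(i)})-\varphi(x,a)|<\varepsilon$. This proves sequential, hence full, continuity.

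\emph{Injectivity.} Suppose $\varphi(x,a_1,a_2)=\varphi(x',a_1',a_2')=:z$. First I would show the itineraries coincide. Since $z\in\mathcal{J}(\mathcal{Z}_\lambda)$, and by \eqref{eqfatou} the Julia set is contained in the disjoint union $\bigsqcup_{r_1+r_2\text{ even}}P(r)\times\mathbb{R}$ of open beams (the closed beams $\overline{P(r)}\times\mathbb{R}$ with $r_1+r_2$ odd lie in the quasi-Fatou set and cover the rest of $\mathbb{R}^3$), the point $z$ lies in exactly one such open beam; comparing with $z\in R_0(x,a)\subset\overline{P(n_0)}\times\mathbb{R}$ and $z\in R_0(x',a')\subset\overline{P(n_0')}\times\mathbb{R}$ forces $n_0=n_0'$, and iterating with $\mathcal{Z}_\lambda^k$ and forward invariance of the Julia set gives $n_k=n_k'$ for all $k$, so $(a_1,a_2)=(a_1',a_2')$. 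It remains to see $x=x'$, which I expect to be the main obstacle. On one hand, $\mathcal{Z}_\lambda^k(z)\in R_k(x,a)\cap R_k(x',a)=S(x_k,n_k)\cap S(x_k',n_k)$ forces $|x_k-x_k'|\leq1$ for every $k$. On the other hand, for points of $B$ the passage $x_k\mapsto x_{k+1}$ is given by a strictly increasing map whose derivative, on the range of heights that actually occur, is at least $p_\lambda>1$; this is precisely where the expansion of Lemmas \ref{inverseexpansion} and \ref{pl-lemma} enters, through the explicit description of $\xi'$ as the smallest base height of a unit box contained in the spherical shell $\mathcal{Z}_\lambda(R_k)$. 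Then $|x_k-x_k'|\geq p_\lambda^{\,k}|x-x'|$ is unbounded unless $x=x'$, and the contradiction finishes the proof. The real calculation lies in this monotonicity--expansion estimate and in the accompanying check that, for $\lambda$ in the stated range, points of $B$ never fall into the degenerate regime where $\xi'$ is pinned at $p_\lambda$ and $x_k\mapsto x_{k+1}$ is merely weakly increasing.
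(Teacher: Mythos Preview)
Your argument is correct and follows essentially the same route as the paper: nested $B_k$ with $\diam B_k\to0$ for well-definedness, the uniform $\alpha^k$-contraction for continuity, and expansion of $x_k\mapsto x_{k+1}$ for injectivity. The paper organises injectivity by a direct case split (either the itineraries differ, giving disjoint $R_k$ immediately, or $x\neq y$ with the same itinerary, in which case $y_k-x_k\to\infty$ forces disjoint $R_k$ eventually), whereas you run the contrapositive and first pin down the itinerary via \eqref{eqfatou}; the content is identical.

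Two minor points. First, the expansion constant is not $p_\lambda$: writing $\xi'=\sqrt{r^2-d^2}$ with $r=\lambda e^{x_k}$ and $d$ the horizontal distance from the origin to $\overline{P(n_{k+1})}$, one gets $\dfrac{d\xi'}{dx_k}=\dfrac{r^2}{\xi'}\geq r\geq \lambda e^{p_\lambda}\geq L/\alpha>1$, which is all you need, but $L/\alpha$ can be strictly smaller than $p_\lambda$. Second, there is no ``degenerate pinned'' regime to worry about: by construction, whenever the geometric minimum $\xi'$ drops below $p_\lambda$ the point is simply excluded from $B$, so for points of $B$ the map $x_k\mapsto x_{k+1}$ is always the smooth strictly increasing one.
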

\begin{proof}
	Let us first show that $\varphi$ is injective. Let $(x,a_1,a_2), (y,b_1,b_2)$ be two different points of $B$ and let $\{(n_{k,1},n_{k,2})\}_{k\in\mathbb{N}}$ and $\{(m_{k,1},m_{k,2})\}_{k\in\mathbb{N}}$ their corresponding sequences. Now either $x\not=y$ or $x=y$ and $(a_1,a_2)\not=(b_1,b_2)$. In the first case we may assume $x<y$ and thus $x_k<y_k$ for all $k\in\mathbb{N}$. We can easily see by the mapping properties of the Zorich map that in fact $y_k-x_k\to\infty$ for $k\to \infty$. Thus for large enough $k$ we can find cubes $R_k(x,a_1,a_2)\cap R_k(y,b_1,b_2)=\emptyset$, which implies that $\varphi(x,a_1,a_2)\not=\varphi(y,b_1,b_2)$. In the second case, we have that $(n_{k,1},n_{k,2})\not=(m_{k,1},m_{k,2})$ for some $k\in\mathbb{N}$ and thus $R_k(x,a_1,a_2)\cap R_k(y,b_1,b_2)=\emptyset$.
	
	Now for the continuity of $\varphi$ let $(x,a_1,a_2)$ and $(y,b_1,b_2)$ be two points in $B$ that are "close" meaning that their corresponding sequences satisfy $n_{k,i}=m_{k,i}$, $i=1,2$, for $k=0,\dots,N$ where $N\in \mathbb{N}$ and $|x-y|<\delta$ for some small $\delta>0$. If $\delta$ is small enough this implies that \[R_k(x,a_1,a_2)\cap R_k(y,b_1,b_2)\not=\emptyset, \hspace{1mm}\text{for all}\hspace{1mm}k\leq N.\] Hence, $B_k(x)\cap B_k(y)\not=\emptyset$ for all $k\leq N$. This implies that \[|\varphi(x,a_1,a_2)-\varphi(y,b_1,b_2)|\leq \diam B_N(x)+\diam B_N(y)\] and because those diameters tend to zero we get that when $N$ is large enough $\varphi(x,a_1,a_2)$ and $\varphi(y,b_1,b_2)$ are close.
	
	Finally, we want to show that $\varphi(x,a_1,a_2)\in\mathcal{J}(\mathcal{Z}_\lambda)$. If $\{(n_{k,1},n_{k,2})\}_{k\in\mathbb{N}}$ is the corresponding sequence of $(a_1,a_2)$ then, by construction and \eqref{juliasquare}, we will have that \[R_k(x,a_1,a_2)\cap \mathcal{J}(\mathcal{Z}_\lambda)\not=\emptyset, \hspace{1mm}\text{for all}\hspace{1mm}k\in\mathbb{N}.\] This implies that $d\left(\varphi(x,a_1,a_2),\mathcal{J}(\mathcal{Z}_\lambda)\right)=0$ and because $\mathcal{J}(\mathcal{Z}_\lambda)$ is closed  we have that   $\varphi(x,a_1,a_2)\in\mathcal{J}(\mathcal{Z}_\lambda)$.
\end{proof}
 
 Next we need to find the inverse of $\varphi$ and show that it is continuous. Let us first define this function and then show that indeed it is the inverse of $\varphi$.
 
 Let $w=(w_1,w_2,w_3)\in\mathcal{J}(\mathcal{Z}_\lambda)$. Remember now that with each $w$ in the Julia set we can associate its itinerary $\Delta(w)=n_0n_1n_2\dots$ and $\mathcal{Z}^k_\lambda(w)\in P(n_k)\times \mathbb{R}$. For any $k\in\mathbb{N}$ define  the boxes $T_j(k)$, $j\in \mathbb{N}$ as follows. First  we set $T_k(k)=S(u,n_{k,1},n_{k,2})$, where $u$ is minimal with respect to the properties $u\geq p_\lambda$ and $\mathcal{Z}^k_\lambda(w)\in T_k(k)$. We now define $T_j(k)$ for $j=0,\dots, k-1$ inductively as follows.  Suppose that $T_j(k)$ has been defined, let $T_{j-1}(k)=S(v,n_{j-1,1},n_{j-1,2})$, where $v$ is maximal with respect to the property \begin{equation}\label{eq100}
 T_j(k)\subset \mathcal{Z}_\lambda(T_{j-1}(k)).
 \end{equation}
Note that a $v$ for which \eqref{eq100} holds exists since $u\geq p_\lambda>1$. Moreover, \eqref{eq100} thanks to the continuity of $\mathcal{Z}_\lambda$, implies that the box $T_j(k)$ hits the inner radius of the half-shell $\mathcal{Z}_\lambda(T_{j-1}(k))$ in exactly one point. Also it implies that the lowest (in terms of $x_3$ coordinate) side of the box $T_{j-1}(k)$ has a third coordinate at least $p_\lambda$ for all $j$. This can be shown by first noting that it is true for $T_{k-1}(k)$ and then we can use induction on $j$ to prove it for all $j$. To see why it is true for $T_{k-1}(k)$ note that if that was not the case and $p_\lambda=J-1$ (see Lemma \ref{pl-lemma}) then $T_{k-1}(k)$ would not contain any points in the Julia set and since $T_k(k)$ does (by Lemma \ref{pl-lemma}) we get a contradiction by \eqref{eq100} and the invariance of the Julia set. If on the other hand $p_\lambda=M$ then the lowest side of the box $T_{k-1}(k)$ would be below $M$ and since $\mathcal{Z}_\lambda(H_{\leq M})\subset H_{<M}$ we get a contradiction by the maximality condition on \eqref{eq100}. Thus in any case the lowest (in terms of $x_3$ coordinate) side of the box $T_{k-1}(k)$ has a third coordinate at least $p_\lambda$.

 We now define $z_k$ by the condition $T_0(k)=S(z_k,n_{0,1},n_{0,2})$ and note that $w\in T_0(k)$. This implies that \begin{equation}\label{eq123}w_3-1\leq z_0\leq z_k \leq w_3,\end{equation}  for all $k\in\mathbb{N}$. We can also prove that  \begin{equation}\label{eq1234}
z_k\leq z_{k+1}.
\end{equation} This follows by the fact that $T_k(k+1)$ is higher than $T_k(k)$ (in terms of $x_3$ coordinate) and thus by the inductive construction  \eqref{eq1234} holds.
 
 Finally, we set $z_{\infty}:=\lim_{k\to\infty}z_k$ and  define $\psi$ by \[\psi(w)=\left(z_{\infty},a_1,a_2\right),\]
 where $(a_1,a_2)$ is the pair of irrationals associated with the pair of sequences $\Delta(w)$.  The next lemma concludes the proof of Theorem \ref{brush}.
 
\begin{lemma}
	The map $\psi$ is the inverse of $\varphi$ and $\varphi$ is a homeomorphism. Moreover, $\varphi$ extends to a homeomorphism between $B\cup\{\infty\}$ and $\mathcal{J}(\mathcal{Z}_\lambda)\cup\{\infty\}$.
\end{lemma}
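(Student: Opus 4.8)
The plan is to show that $\psi$ takes values in $B$ and is a two–sided inverse of $\varphi$, and then to deduce the homeomorphism property \emph{and} the extension to $\infty$ from a single compactness argument rather than from a direct continuity estimate for $\psi$.

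\textbf{Step 1 (reconciling the two constructions).} The main obstacle is to prove that for $w\in\mathcal{J}(\mathcal{Z}_\lambda)$ with itinerary $n_0n_1\dots$ and associated irrationals $(a_1,a_2)$ one has $\mathcal{Z}^j_\lambda(w)\in R_j(z_\infty,a_1,a_2)$ for every $j\geq0$; in particular $R_j(z_\infty,a_1,a_2)\neq\emptyset$, so $\psi(w)=(z_\infty,a_1,a_2)\in B$ (note $z_\infty<\infty$ by \eqref{eq123}). The subtlety is that the cubes $T_j(k)$ are built top–down by choosing, at each level $j-1$, the \emph{highest} admissible box over $\overline{P(n_{j-1})}$, whereas the cubes $R_j$ are built bottom–up from a fixed base by choosing the \emph{lowest} admissible box over $\overline{P(n_j)}$ subject to $\xi'\ge p_\lambda$. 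I would show these operations are mutually inverse away from that constraint: by maximality of $v$ in the $T$–construction $T_j(k)$ meets the inner sphere of the half–shell $\mathcal{Z}_\lambda(T_{j-1}(k))$ in exactly one point (as already observed after \eqref{eq100}), and "touching the inner sphere" is precisely the minimality condition defining $R_j$ from $R_{j-1}$, since lowering a box over a fixed $\overline{P(n_j)}$ strictly decreases the distance from the origin of its closest point. Writing $v^{(k)}_j$ for the base of $T_j(k)$, one checks that $v^{(k)}_j$ is nondecreasing in $k$ (by the same monotonicity used for \eqref{eq1234}, propagated down one level at a time) and that $v^{(k)}_j\in[\,p_3(\mathcal{Z}^j_\lambda(w))-1,\;p_3(\mathcal{Z}^j_\lambda(w))\,]$, so $v^{(\infty)}_j:=\lim_k v^{(k)}_j$ exists; an induction on $j$ then gives $R_j(z_\infty,a_1,a_2)=S(v^{(\infty)}_j,n_{j,1},n_{j,2})$, using $z_\infty=v^{(\infty)}_0$ and the fact that the constraint $v^{(k)}_j\ge p_\lambda$ survives in the limit — this is exactly the point where the two constructions could a priori disagree, and it is handled by the "lowest side at least $p_\lambda$" observation already made for the $T_j(k)$. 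Since $\mathcal{Z}^j_\lambda(w)\in T_j(k)$ for all $k\ge j$ and the total increase of the bases is at most $1$, passing to the limit yields $\mathcal{Z}^j_\lambda(w)\in S(v^{(\infty)}_j,n_{j,1},n_{j,2})=R_j(z_\infty,a_1,a_2)$.

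\textbf{Step 2 (the inverse identities).} Step 1 gives $w\in\bigcap_k B_k(z_\infty)$, i.e.\ $\varphi(\psi(w))=w$ for every $w\in\mathcal{J}(\mathcal{Z}_\lambda)$, so $\varphi$ is onto. Conversely, for $(x,a_1,a_2)\in B$ the point $w=\varphi(x,a_1,a_2)$ satisfies $\mathcal{Z}^j_\lambda(w)\in R_j(x,a_1,a_2)\subset \overline{P(n_j)}\times\mathbb{R}$, so $w$ has itinerary $n_0n_1\dots$ and $\psi(w)=(x',a_1,a_2)$ for some $x'$; applying $\varphi$ and using $\varphi\circ\psi=\mathrm{id}$ gives $\varphi(x',a_1,a_2)=w=\varphi(x,a_1,a_2)$, whence $x'=x$ by the injectivity of $\varphi$ already proved. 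Thus $\psi=\varphi^{-1}$, and $\varphi$ is a continuous bijection of $B$ onto $\mathcal{J}(\mathcal{Z}_\lambda)$.

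\textbf{Step 3 (homeomorphism and extension to $\infty$).} Extend $\varphi$ to $\overline{\varphi}\colon B\cup\{\infty\}\to\mathcal{J}(\mathcal{Z}_\lambda)\cup\{\infty\}$ by $\overline{\varphi}(\infty)=\infty$, both spaces viewed as subspaces of $\overline{\mathbb{R}^3}=\mathbb{R}^3\cup\{\infty\}$. Since $B$ and $\mathcal{J}(\mathcal{Z}_\lambda)$ are closed in $\mathbb{R}^3$, both $B\cup\{\infty\}$ and $\mathcal{J}(\mathcal{Z}_\lambda)\cup\{\infty\}$ are closed in the compact metrizable space $\overline{\mathbb{R}^3}$; so the domain is compact, the target is Hausdorff, and $\overline{\varphi}$ is a bijection by Step 2. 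It remains to verify continuity at $\infty$: if $b_n=(y_n,a_{1,n},a_{2,n})\in B$ with $|b_n|\to\infty$, then $\varphi(b_n)\in R_0(b_n)=S(y_n,n_{0,1},n_{0,2})$ forces $p_3(\varphi(b_n))\in[y_n,y_n+1]$, while the first two coordinates of $\varphi(b_n)$ lie in $\overline{P(n_{0,1},n_{0,2})}$, which the address–to–beam correspondence keeps within a bounded distance of $(a_{1,n},a_{2,n})$; hence $|\varphi(b_n)|\to\infty$, i.e.\ $\overline{\varphi}(b_n)\to\infty$. A continuous bijection from a compact space onto a Hausdorff space is a homeomorphism, so $\overline{\varphi}$ is a homeomorphism $B\cup\{\infty\}\to\mathcal{J}(\mathcal{Z}_\lambda)\cup\{\infty\}$; restricting to the open subset $B$ shows $\varphi\colon B\to\mathcal{J}(\mathcal{Z}_\lambda)$ is a homeomorphism as well. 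This proves the lemma (and completes Theorem \ref{brush}); the only genuinely delicate point is Step 1, once the $T_j(k)$–$R_j$ dictionary is in place everything else is bookkeeping.
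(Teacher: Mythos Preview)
Your proof is correct and follows essentially the same route as the paper: both identify $R_j(z_\infty,a_1,a_2)$ with the Hausdorff limit $T_j(\infty)$ of the boxes $T_j(k)$ (your $S(v_j^{(\infty)},n_{j,1},n_{j,2})$), deduce $\varphi\circ\psi=\mathrm{id}$, and then pass to the one-point compactification and invoke the compact-to-Hausdorff bijection lemma. One small slip in Step~3: the beam $\overline{P(n_{0,1},n_{0,2})}$ is centred at $(2n_{0,1},2n_{0,2})$, not within bounded distance of $(a_{1,n},a_{2,n})\in I_{n_{0,1}}\times I_{n_{0,2}}\subset (n_{0,1},n_{0,1}+1)\times(n_{0,2},n_{0,2}+1)$, but your conclusion $|\varphi(b_n)|\to\infty$ is still correct since bounded first two coordinates of $\varphi(b_n)$ force bounded $(n_{0,1},n_{0,2})$ and hence bounded $(a_{1,n},a_{2,n})$.
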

\begin{proof}
	We will first show that \begin{equation}\label{eq109}\varphi\circ\psi=\text{id}_{\mathcal{J}(\mathcal{Z}_\lambda)}.\end{equation}
	Let $w\in\mathcal{J}(\mathcal{Z}_\lambda)$ and we continue using notation as above.
	We define $T_0(\infty)=S(z_\infty,n_{0,1},n_{0,2})$ and note that $T_0(\infty)=\lim_{k\to\infty}T_0(k)$ in the Hausdorff metric. Define also $T_j(\infty):=\lim_{k\to\infty}T_j(k)$, where the limit is again in the Hausdorff metric. Notice that by construction the box $T_j(\infty)$ hits the inner radius of the half-shell $\mathcal{Z}_\lambda(T_{j-1}(\infty))$ in exactly one point and that point has a third coordinate at least $p_\lambda$.  Applying now the construction of $\varphi$ to the point $\psi(w)=\left(z_\infty, a_1,a_2\right)$,  where $(a_1,a_2)$ is the pair of irrationals associated with the pair of sequences $\Delta(w)$ and using the notation of the construction of $B$ we get that $x_0=z_\infty$ and \[R_k(x,a_1,a_2)=T_k(\infty),\] for all $k\in\mathbb{N}$. Hence, $\psi(w)\in B$ and \[\varphi(\psi(w))=\varphi(z_\infty,a_1,a_2)=\bigcap_{k\in\mathbb{N}}\mathcal{Z}^{-k}_\lambda(T_k(\infty)),\]
	where ${Z}^{-k}_\lambda$ is the composition of inverse branches of $\mathcal{Z}_\lambda$ to appropriate square beams (see construction of $\varphi$).
	Now note that by equation (\ref{eq123}) taking limits we have that $w\in T_0(\infty)$. We can prove similar equations with (\ref{eq123}), (\ref{eq1234}) for all boxes $T_j(k)$. Hence, we will also have that $\mathcal{Z}^k_\lambda(w)\in T_k(\infty)$. This implies that $$w\in \bigcap_{k\in\mathbb{N}}\mathcal{Z}^{-k}_\lambda(T_k(\infty))$$ and since that intersection contains only one point we get that $\varphi(\psi(w))=w$.
	
	Equation (\ref{eq109}) implies now that $\varphi$ is onto the Julia set and thus a bijection. Since we have already proven that $\varphi$ is continuous we can now extend $\varphi$ to a continuous map $\hat{\varphi}$, from $B\cup\{\infty\}$ to the one point compactification  $\mathcal{J}(\mathcal{Z}_\lambda)\cup\{\infty\}$ of $\mathcal{J}(\mathcal{Z}_\lambda)$, with the spherical metric by setting $\hat{\varphi}(\infty)=\infty$, since $\varphi(b)\to\infty$ as $b\to\infty$, $b\in B$. We can now use a well known lemma (see for example \cite[Theorem 4.17]{Rudin1976}) and immediately conclude that in fact $\hat{\varphi}$ is a homeomorphism (with the spherical metric) and $\hat{\psi}$, defined as the extension of $\psi$ on $\mathcal{J}(\mathcal{Z}_\lambda)\cup\{\infty\}$ by setting $\hat{\psi}(\infty)=\infty$,  is its inverse. This implies that $\psi$ is continuous with the euclidean metric.
\end{proof}

\section{Proving Theorem \ref{fan}}
First we will show that the model of the Julia set, namely the 3-d straight brush, is a Lelek fan.

\begin{lemma}\label{brushlelek}
	The one point compactification $B\cup\{\infty\}$ of a  3-d straight brush $B$ is a Lelek fan with top at $\infty$.
\end{lemma}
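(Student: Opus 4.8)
The plan is to verify the two defining properties of a Lelek fan — smoothness and density of endpoints — for the space $B\cup\{\infty\}$, after first checking that it is genuinely a fan with top $\infty$. For the fan structure, the key observation is that each point $(t,a_1,a_2)\in B$ lies on the vertical ray $\{(s,a_1,a_2):s\ge t_{(a_1,a_2)}\}$, and by property (i) of a 3-d straight brush (hairiness) these rays, together with the point $\infty$, are exactly the maximal arcs of $B\cup\{\infty\}$. In the one-point compactification each such ray becomes an arc from its endpoint $(t_{(a_1,a_2)},a_1,a_2)$ to $\infty$, and any two of them meet only at $\infty$; this gives arcwise connectedness, hereditary unicoherence, and the fact that $\infty$ is the unique point lying on three otherwise-disjoint arcs. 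I would also remark that $B\cup\{\infty\}$ is compact: $B$ is closed in $\mathbb{R}^3$ by property (iii), and adding $\infty$ compactifies it since the rays escape to infinity in the first coordinate.

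Next I would establish \textbf{smoothness}. Suppose $y_n=(t_n,a_{1,n},a_{2,n})\to y=(t,a_1,a_2)$ in $B\cup\{\infty\}$; the nontrivial case is $y\in B$. The arc $[\infty,y_n]$ is the vertical ray above $(a_{1,n},a_{2,n})$ starting at height $t_{(a_{1,n},a_{2,n})}$, and similarly for $[\infty,y]$. Since $y_n\in B$ forces $t_n\ge t_{(a_{1,n},a_{2,n})}$, and $t_n\to t$, the starting heights satisfy $\limsup_n t_{(a_{1,n},a_{2,n})}\le t$. Conversely, if some subsequence had $t_{(a_{1,n},a_{2,n})}$ bounded above by some $t'<t$, then the points $(t',a_{1,n},a_{2,n})$ would lie in $B$ and converge to $(t',a_1,a_2)$, which by closedness of $B$ would force $t_{(a_1,a_2)}\le t'<t$, contradicting $y=(t,a_1,a_2)$ being in $B$ only if... wait — actually $(t,a_1,a_2)\in B$ is consistent with a smaller $t_{(a_1,a_2)}$, so I must be more careful: what I actually need is $t_{(a_{1,n},a_{2,n})}\to t_{(a_1,a_2)}$, and this is \emph{not} automatic from closedness alone. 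This is the main obstacle, and I would handle it using property (ii) of the brush together with the homeomorphism $\left(\mathbb{R}\setminus\mathbb{Q}\right)^2\cong\prod\mathbb{Z}^2$ and upper semicontinuity of $(a_1,a_2)\mapsto t_{(a_1,a_2)}$ coming from closedness; lower semicontinuity along sequences in $A$ is exactly what the convergence clause of (ii) grants in the coordinate directions, and a diagonal/squeezing argument combining the monotone approximating sequences $a_{n,2}\uparrow a_2$, $b_{n,2}\downarrow a_2$, etc., upgrades this to full continuity of $t_{(\cdot,\cdot)}$ on its domain. Once $t_{(a_{1,n},a_{2,n})}\to t_{(a_1,a_2)}$, the rays $[\infty,y_n]$ converge to $[\infty,y]$ in the Hausdorff metric on $B\cup\{\infty\}$.

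Finally, \textbf{density of endpoints}. The endpoints of the fan are precisely the points $(t_{(a_1,a_2)},a_1,a_2)$ with $t_{(a_1,a_2)}<\infty$, together with possibly $\infty$ itself. Given an arbitrary point $(t,a_1,a_2)\in B$ and $\varepsilon>0$, I would use the density clause of property (ii): the set of $(b_1,b_2)$ with $t_{(b_1,b_2)}<\infty$ is dense in $\left(\mathbb{R}\setminus\mathbb{Q}\right)^2$ and the approximating sequences can be taken with $t_{(b_1,b_2)}\to t_{(a_1,a_2)}\le t$. Pick such $(b_1,b_2)$ close to $(a_1,a_2)$ with $|t_{(b_1,b_2)}-t_{(a_1,a_2)}|$ small; then the endpoint $(t_{(b_1,b_2)},b_1,b_2)$ is within $\varepsilon$ of $(t,a_1,a_2)$ provided also $t_{(b_1,b_2)}$ is close to $t$ — which we arrange by first moving vertically down the ray above $(a_1,a_2)$ is not allowed, so instead I pick $(b_1,b_2)$ with $t_{(b_1,b_2)}$ arbitrarily large (again possible by the density clause, since the approximating hairs can have heights tending to $t_{(a_1,a_2)}$ but also, by choosing deeper nodes in the tree, one finds hairs with $t_{(b_1,b_2)}$ close to any prescribed value $\ge p_\lambda$; here I may instead invoke the concrete construction and \eqref{juliasquare}). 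The cleanest route, given Theorem \ref{brush}, is to prove density of endpoints directly for $\mathcal{J}(\mathcal{Z}_\lambda)\cup\{\infty\}$ using that every plane $x_3=c$ with $c>p_\lambda+1$ meets $\mathcal{J}(\mathcal{Z}_\lambda)$ (Lemma \ref{pl-lemma}) and transporting back through the homeomorphism; I would pick whichever of the two formulations makes the estimate shortest. Either way, density of endpoints follows, completing the proof that $B\cup\{\infty\}$ is a Lelek fan with top $\infty$.
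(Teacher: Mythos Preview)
Your verification of the fan structure is fine. The two substantive issues are smoothness and density of endpoints, and in both places your proposal goes astray.

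\textbf{Smoothness.} You misidentify the arc $[\infty,y_n]$. In the definition of smoothness the arc is the unique arc from the top $\infty$ to the point $y_n=(t_n,a_{1,n},a_{2,n})$ itself, namely
\[
[\infty,y_n]=\{(s,a_{1,n},a_{2,n}):s\ge t_n\}\cup\{\infty\},
\]
not the full hair starting at height $t_{(a_{1,n},a_{2,n})}$. Once this is corrected the convergence $[\infty,y_n]\to[\infty,y]$ in the Hausdorff metric is immediate from $t_n\to t$ and $(a_{1,n},a_{2,n})\to(a_1,a_2)$; no continuity of the endpoint map $(a_1,a_2)\mapsto t_{(a_1,a_2)}$ is required. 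Your attempted argument for that continuity (combining closedness with property~(ii)) is not needed and, as you yourself noticed, does not go through cleanly anyway.

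\textbf{Density of endpoints.} This is the genuine content of the lemma, and your proposal does not reach it. Property~(ii) only guarantees nearby hairs whose endpoints have height close to $t_{(a_1,a_2)}$; it says nothing about finding hairs whose endpoints have height close to an \emph{arbitrary} $t>t_{(a_1,a_2)}$, which is what you need. Your fallback of passing through $\mathcal{J}(\mathcal{Z}_\lambda)$ via Theorem~\ref{brush} and Lemma~\ref{pl-lemma} is not legitimate here: the lemma is stated for an arbitrary 3-d straight brush, and in any case the paper uses this lemma to prove Theorem~\ref{fan}, so invoking dynamical facts about $\mathcal{Z}_\lambda$ would be circular. The paper's argument is quite different from anything you sketch: one introduces the spherical length function $\mathcal{L}(a_1,a_2)$ of the hair at $(a_1,a_2)$, which is upper semi-continuous by closedness of $B$. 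For a fixed level $c$ one restricts to a line $\{b_1\}\times\mathbb{R}$ and shows that $\{a_2:\mathcal{L}(b_1,a_2)\ge c,\ |a_2|\le t\}$ is a Cantor set (closed, perfect by property~(ii), totally disconnected). Under the order-preserving homeomorphism to the standard Cantor set, points with $\mathcal{L}(b_1,a_2)>c$ cannot map to endpoints of complementary gaps (again by property~(ii), since such points are approximated from both sides), so only points with $\mathcal{L}(b_1,a_2)=c$ map to those endpoints; density of gap-endpoints in the Cantor set then gives density of $\{\mathcal{L}=c\}$ in $\{\mathcal{L}\ge c\}$. Choosing $c$ to be the spherical length of the segment above the target point yields endpoints converging to it.
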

\begin{proof}
	Any subcontinuum of $B\cup\{\infty\}$ is a collection of segments of straight lines together with $\infty$ or a segment of one such line. Obviously the intersection of any two such subcontinua is again of that form and thus connected. Properties (ii) and (iii) of the definition of a fan are obviously satisfied for $B\cup\{\infty\}.$ Smoothness is also quite easy to prove since arcs $[y_n,\infty]$ in $B\cup\{\infty\}$ are straight lines and thus converge to a straight line $[y,\infty]$ in the Hausdorff metric, when $y_n\to y$.
	
	Finally, we prove that the set of endpoints $\mathcal{E}(B)$ is dense in $B$. Choose a point $(x,b_1,b_2)\in B$. We will  show that there are hairs with \begin{equation}\label{eq001}(t_{b_{1,n},b_{2,n}},b_{1,n},b_{2,n})\to(x,b_1,b_2),\end{equation} as $n\to \infty$. In other words their endpoints converge to $(x,b_1,b_2)$ and this obviously gives us the density of endpoints.

	 Consider the length function $\mathcal{L}:\left(\mathbb{R}\setminus\mathbb{Q}\right)^2\to [0,\infty)$ which measures the spherical length of the hair at $(a_1,a_2)$. It is easy to see that thanks to the closedness of $B$, $\mathcal{L}$ is upper semi-continuous. Let $$M:=\sup\{\mathcal{L}(a_1,a_2):(a_1,a_2)\in\left(\mathbb{R}\setminus\mathbb{Q}\right)^2\} . $$ Take now any $c\in(0,M)$ and consider the set \[V:=\{(a_1,a_2)\in\left(\mathbb{R}\setminus\mathbb{Q}\right)^2:\mathcal{L}(a_1,a_2)>c\}.\]
	 
	 	Fix now a point $(b_1,b_2)$ in $V$ and consider the set $$W:=\{(b_1,a_2)\in\overline{V}: |a_2|\leq t,  \mathcal{L}(b_1,a_2)> c\}, $$ where $t>0$ a constant.
	 
	  From the upper semi-continuity of $\mathcal{L}$ we conclude that the set \[\{(a_1,a_2)\in\left(\mathbb{R}\setminus\mathbb{Q}\right)^2:\mathcal{L}(a_1,a_2)\geq c\}\] is closed and thus contains $\overline{W}$. 
	
 The set $\overline{W}$ is compact and lies on a line. By property (ii) of the definition of a 3-d straight brush now we will have that it is also perfect. Moreover, it is a totally disconnected set, since it is a subset of a totally disconnected set. Hence, by the characterization of the Cantor set (see for example \cite[Theorem 6.17]{Cannon2017} or the more general \cite[Theorem 7.4]{Kechris1995}) we get that $\overline{W}$ is a Cantor set (i.e. homeomorphic to the standard ternary Cantor set $\mathcal{C}$). In fact, the sets $\overline{W}$ and $\mathcal{C}$ are ambiently homeomorphic (meaning here that the homeomorphism extends to the line on which $\overline{W}$ lies). This implies that the obvious order in the sets is either preserved or reversed under the homeomorphism. This in turn implies, that a point $(b_1,a_2)\in \overline{W}$ for which $\mathcal{L}(b_1,a_2)>c$ cannot get mapped to an endpoint of the Cantor set since, thanks to property (ii) of a 3-d straight brush, it can be approximated from both sides by other points  of $\overline{W}$ (unless of course $a_2=\pm t$ but those are just two points). Hence, only points with $\mathcal{L}(b_1,a_2)=c$ get mapped to endpoints and since endpoints are dense in the Cantor set so are  points with $\mathcal{L}(b_1,a_2)=c$ dense in $\overline{W}$.
	
	Since this argument works for any $(b_1,b_2)\in V$  we have that points with $\mathcal{L}(a_1,a_2)=c$ are dense in $\overline{V}$.

	Take now any point $(x,b_1,b_2)\in B$, which is not an endpoint, and let $c$ be the spherical length of the hair segment $\{(t,b_1,b_2):t\geq x\}$. This  implies that  $\mathcal{L}(b_1,b_2)>c.$  As we have shown then there is a sequence of hairs $(t_{b_{1,n},b_{2,n}},b_{1,n},b_{2,n})$ of length $c$ for which  $(b_{1,n},b_{2,n})\to (b_1,b_2)$ and thus the endpoints of the hairs $(t_{b_{1,n},b_{2,n}},b_{1,n},b_{2,n})$ converge to $(x,b_1,b_2)$ as we wanted.
\end{proof}

\begin{proof}[Proof of Theorem \ref{fan}]
From Theorem \ref{brush} we know that there  a homeomorphism $$\hat{\varphi}:B\cup\{\infty\}\to\mathcal{J}(\mathcal{Z}_\lambda)\cup\{\infty\}.$$ By Lemma \ref{brushlelek} we know  that $B\cup\{\infty\}$ is a Lelek fan. It is quite easy to see that all the properties of a Lelek fan are preserved under a homeomorphism and thus $\mathcal{J}(\mathcal{Z}_\lambda)$ will also be a Lelek fan with top at $\infty$. 
\end{proof}
\section{Hairy  squares and hairy surfaces}
\label{hairysurfaces}

In this section we generalize the notion of a hairy arc that Aarts and Oversteegen first introduced in \cite{Aarts1993}. Our exposition closely follows theirs but as we shall see some things are different in three dimensions.

First we will introduce the notion of a \textit{straight one-sided hairy square} (abbreviated soshs) which will be a generalization of \textit{straight one-sided hairy arcs} (abbreviated sosha) on the plane (see \cite{Aarts1993} for that definition). Let $I=[0,1]$.
\begin{definition}[Straight one-sided hairy square]
	A straight one-sided hairy square $X$ is a compact subset of $I^3$ satisfying the following properties. There is a function $\ell:I^2\to I$, called the length function, such that \begin{enumerate}[label=(\roman*)]
		\item For all $(x,y,z)\in I^3$ we have $(x,y,z)\in X$ if and only if $0\leq z\leq\ell(x,y)$.
		\item The sets $$\{(x,y)\in I^2:\ell(x,y)>0\}, \hspace{1mm}\{x\in I:\ell(x,y)=0, \forall y\in I\}\hspace{1mm}\text{and}\hspace{1mm} \{y\in I:\ell(x,y)=0, \forall x\in I\}$$ are  dense in $I^2$, $I$ and $I$ respectively and $\ell(0,t)=\ell(1,t)=\ell(t,0)=\ell(t,1)=0$ for all $t\in[0,1]$.
		\item For each $(x,y)\in I^2$ with $\ell(x,y)>0$ there exist sequences  $a_{n}$, $b_{n}$,  $c_{n}$,  $d_{n}$  such that $a_{n}\uparrow y$, $ b_{n}\downarrow y$, $c_{n}\uparrow x$, $d_{n}\downarrow x$. Moreover it is true that $\ell\left(x,a_{n}\right)\to \ell(x,y)$ and similarly for the other sequences. 
	\end{enumerate}
\end{definition}
  For each $(x,y)\in I^2$, the set $\{(x,y,z):0\leq z\leq\ell(x,y) \}$ will be called the \textit{hair} at $(x,y)$ while the set $I^2\times \{0\}$ will be called the \textit{base}.

  The usefulness of the above object lies in the fact that when we suitably embed a 3-d straight brush to $I^3$ with the usual topology and then compactify that embedding we get a soshs.
  
  Indeed, let $\mathcal{H}:\mathbb{R}^3\to I^3$ be defined as 
  
  \begin{equation}\label{embe}\mathcal{H}(x,y,z)=\left(\frac{\arctan y}{\pi}+\frac{1}{2},\frac{\arctan z}{\pi}+\frac{1}{2},\frac{\mathcal{L}\left([x,\infty)\times \{(y,z)\}\right)}{\pi}\right)\end{equation}
  where $\mathcal{L}$ is once again the spherical length of the half line $[x,\infty)\times \{(y,z)\}$. We note here that we are viewing $\overline{\mathbb{R}^3}$ as the sphere of centre $(0,0,1/2)$ and radius $1/2$ so the spherical length of any straight line is less or equal than $\pi$. 
  
  It is easy to see now that $\mathcal{H}$ is an embedding of $\mathbb{R}^3$ to $I^3$
  and that if  $B$ is a 3-d straight brush then the compactification of $\mathcal{H}(B)$ in $I^3$, with usual topology is a soshs.

  After they defined straight one-sided hairy arcs, Aarts and Oversteegen went on to define the notions of a \textit{hairy arc} and a \textit{one-sided hairy arc} which are a homeomorphic image of a sosha and a homeomorphic image of a sosha on the plane with all of the hairs attached on the same side respectively. The importance of those objects becomes apparent when we notice that  compactified versions of Julia sets of many entire transcendental functions are one-sided hairy arcs. Initially, Aarts and Oversteegen showed this for some functions in the exponential family and some functions in the sine and cosine families. That was extended to much larger classes of transcendental entire maps by Baranski, Jarque and Rempe in \cite{Baranski2012}. 
  
  Generalizing  in $\mathbb{R}^3$ we define the notions of a \textit{hairy surface} and a \textit{one-sided hairy surface}.
  
  \begin{definition}[Hairy Surface]
  	A hairy surface is any homeomorphic image of a soshs. The base of the hairy surface is the image of $I^2\times\{0\}$ under that homeomorphism and the hairs are the images of the hairs of the soshs. A one-sided hairy surface  is an embedding $\varphi$ of a hairy surface $X$, with base $D$, in $\mathbb{R}^3$ such that all hairs are attached to the same side of the base $\varphi(D)$. 
  \end{definition}
  The notion of same sidedness is intuitively clear. Rigorously we can define it as follows. There is a surface $\mathcal{S}$ which contains $\varphi(D)$ and all of the hairs are contained in the same bounded complementary component of $\mathcal{S}$.

  Next we prove that when we suitably compactify the Julia sets of some Zorich maps we do get hairy surfaces. 
 
\begin{proof}[Proof of Theorem \ref{compactify}]
	We know from section \ref{section3} that there is a homeomorphism $\psi:\mathcal{J}(\mathcal{Z}_\lambda)\to B$, where $B$ is a 3-d straight brush. Consider now the embedding $f:\mathcal{J}(\mathcal{Z}_\lambda)\to I^3$, with $f=\mathcal{H} \circ \psi$, where $\mathcal{H}:B \to I^3$ was defined in \eqref{embe}. The compactification we are looking for is the one induced by the embedding $f$ (see for example \cite[Chapter 5.3]{Munkres1974}). Also, by construction, $f$ extends to a homeomorphism $\widetilde{f}$ between the compactification of $\mathcal{J}(\mathcal{Z}_\lambda)$, $\widetilde{\mathcal{J}(\mathcal{Z}_\lambda)}$ and $\overline{\mathcal{H}(B)}$ the closure of $\mathcal{H}(B)$ in the usual topology of $I^3$. Hence, $\widetilde{\mathcal{J}(\mathcal{Z}_\lambda)}$ is a  hairy surface.
\end{proof}

\section{Wild one sided hairy surfaces}
When studying embeddings of a subset $X$ of $\mathbb{R}^d$  in $\mathbb{R}^d$ an important notion is that of \textit{tameness} of $X$.  In other words, whether or not there is a homeomorphism $H:\mathbb{R}^d\to\mathbb{R}^d$ sending $X$ to $h(X)$, where $h:X\to   \mathbb{R}^d$ is a homeomorphism onto its image. We also say that $X$ and $h(X)$ are then ambiently homeomorphic.

Aarts and Oversteegen in \cite{Aarts1993} showed that  all one-sided hairy arcs are tame. In other words any embedding $\varphi:X\to\mathbb{R}^2$ of a sosha $X$  for which the hairs of $\varphi(X)$ are all on the same side extends to a homeomorphism of the whole plane. This is reminiscent of the tameness of the Cantor set and the arc in the plane. Once we move however to higher dimensions the situation is different. It is well known that in $\mathbb{R}^3$, for example, there are wild arcs, wild Cantor sets and wild spheres meaning sets that are homeomorphic images of $[0,1]$, the standard ternary Cantor set and the unit sphere respectively and yet they are not ambiently homeomorphic to those sets. Classical examples of such sets are the Wild arc, Antoine's necklace and Alexander's horned sphere (see for example \cite{Moise1977} and references therein).

The situation is similar for soshs' and one sided hairy surfaces as Theorem \ref{wild} shows.

In order to prove our theorem we will first construct a soshs $S$ and then a homeomorphism which takes that soshs to a wild one sided hairy surface. 

\subsection{Constructing a soshs} 
For convenience we will construct our soshs as a subset of $I^2\times [0,3/2]$ instead of $I^3$.

Our soshs will be constructed as an intersection of subsets $T_n$ of $I^2\times [0,3/2]$ each of which comprises of a square base $I^2$ and a set of cuboids $R(n,i,j)$. The cuboids will have a square base on $I^2$ and will have their sides parallel with the axis $x$, $y$ and $z$. The construction proceeds inductively. Set $T_0=T_1=I^2\times [0,3/2]$ and partition $I^2$ in 9 equal squares $Q(2,i,j)$, $i,j=1,2,3$ which will be the base of our cuboids $R(2,i,j)$.  On each of those squares we erect a cuboid of height as in Figure \ref{Squares}. The bottom left corner square is $Q(2,1,1)$ and the top right one is $Q(2,3,3)$.
\begin{figure}[h]
	\centering
	\setlength\tabcolsep{0pt}
	\begin{tabular}{|@{\rule[-0.4cm]{0pt}{1.2cm}}*{3}{M{1.2cm} |}}
		\hline
		1/2 & 1/2 & 1/2\\
		\hline
		1/2 & 3/2 & 1/2 \\
		\hline
		1/2 & 1/2 & 1/2 \\
		\hline
	\end{tabular}
	\caption{The squares $Q(2,i,j)$ and the heights of the corresponding cuboids}
	\label{Squares}
\end{figure}

We now name $T_2=\bigcup_{i,j\in\{1,2,3\}}R(2,i,j)$. Suppose now that we have constructed $T_n$ and it is a finite union of cuboids $R(n,i,j)$ with bases $Q(n,i,j)$. Take each square  base $Q(n,i,j)$ and partition it in $(2n+1)^2$ equal squares, which we will now call  $Q(n,i,j,k,l)$. After we define $T_{n+1}$ we relabel those squares as $Q(n+1,i,j).$  Notice that we have used the same indices $i,j$ although they take different values for different values of $n$. Also, let us denote by $h(n,i,j)$ the height of the cuboid $R(n,i,j)$.

We now erect cuboids $R(n,i,j,k,l)$ with bases $Q(n,i,j,k,l)$ with heights \[h(n,i,j,k,l)=\begin{cases}
\frac{1}{n+1}h(n,i,j), &\hspace{2mm}\text{if}\hspace{2mm} l=1 \hspace{2mm}\text{or}\hspace{2mm} l=2n+1\hspace{2mm}\text{or}\hspace{2mm} k=1\hspace{2mm}\text{or}\hspace{2mm} k=2n+1\\\\
h(n,i,j),& \hspace{2mm}\text{if}\hspace{2mm} Q(n,i,j,k,l)\hspace{2mm}\text{is the central square of}\hspace{2mm} Q(n,i,j)\\\\
\frac{n}{n+1}h(n,i,j), &\hspace{2mm}\text{otherwise}.

\end{cases}\]
Then we define $T_{n+1}=\bigcup_{i,j,k,l}R(n,i,j,k,l).$\\

\begin{figure}[H]
	\centering
	\setlength\tabcolsep{0pt}
	\begin{tabular}{|@{\rule[-0.4cm]{0pt}{1cm}}*{5}{M{1cm} |}}
		\hline
		1/2 & 1/2 & 1/2 & 1/2 & 1/2 \\
		\hline
		1/2 & 1 & 1 & 1 & 1/2 \\
		\hline
		1/2 & 1 &  3/2 & 1 & 1/2 \\
		\hline
		1/2 & 1 & 1 & 1 & 1/2 \\
		\hline
		1/2 & 1/2 & 1/2 & 1/2 & 1/2 \\
		\hline
	\end{tabular}
	\caption{The squares $Q(2,2,2,k,l)$ and the heights $h(2,2,2,k,l)$ of the corresponding cuboids}
	\label{Squares2}
\end{figure}

Our soshs will then be defined as $S=\bigcap_{n=0}^{\infty}T_n$. Since every $T_n$ is a continuum and $T_{n+1}\subset T_n$, $S$ will also be a continuum and it is easy to see that it will satisfy all of the properties of a soshs. We will only prove here that property (iii) of the definition of a soshs is satisfied. 

Indeed, for any hair $\delta:[0,1]\to S$ of $S$ at the base point $(x,y)$  it will be true that there is a sequence of cuboids $R(n,i_n,j_n)$ with $R(k+1,i_{k+1},j_{k+1})\subset R(k,i_k,j_k)$, for all $k\in\mathbb{N}$ and $$\bigcap_{n}R(n,i_n,j_n)=\delta\left([0,1]\right).$$

It is true now that there is a subsequence $n_k$ such that $R(n_k,i_{n_k},j_{n_k})$ is the central square of $R(n_k-1,i_{n_k-1},j_{n_k-1})$ since otherwise  there would not be a hair at $(x,y)$.

We now find a sequence of hairs $\delta_k$, $k=1,2,\dots$ with base points $(x_k,y)$ such that $x_k\uparrow x$ and  $\ell(x_k,y)\to \ell(x,y)$. We choose the hair $\delta_k$ as the one defined by the sequence of cuboids \begin{align*}R(1,i_1,j_1),&\dots,R(n_k-1,i_{n_k-1},j_{n_k-1}), R(n_k,i_{n_k}-1,j_{n_k}),\\&R(n_k+1,m_{1,k},j_{k+1}), R(n_k+2,m_{2,k},j_{n_k+2}),\dots,\end{align*}
where $m_{i,k}$, $i\in\mathbb{N}$ is a sequence of integers such that the sequence of squares \[Q(n_k+1,m_{1,k},j_{k+1}), Q(n_k+2,m_{2,k},j_{n_k+2}),\dots\] is the sequence of squares \[Q(n_k+1,i_{n_k+1},j_{k+1}),  Q(n_k+2,i_{n_k+2},j_{n_k+2}),\dots\] translated to the left by the length of a side of a square at the $n_k$ level.

It is now easy to see that $\delta_k$ has its base point at $(x_k,y)$ with $x_k\uparrow x$. Also the hairs $\delta_k$ have length $$\ell(x_k,y)=\frac{n_k}{n_k+1}\ell (x,y)$$ so that $\ell(x_k,y)\to\ell(x,y)$ as $k\to\infty$.

Similarly we can construct the other sequences of points from property (iii) of the definition of a soshs.
\subsection{Proof of Theorem \ref{wild}}
Before we proceed with the proof of Theorem \ref{wild} let us first introduce some standard terminology taken from \cite{Moise1977}. 

Let $V=\{u_0, u_1, \dots, u_n\} $ be a set of $n+1$  points in $ \mathbb{R}^d$ which are affinely independent, meaning $u_1-u_0$, $u_2-u_0$, $\dots, u_n-u_0$ are linearly independent. Then the \textit{$n$-simplex} is defined as the convex hull of $V$. We will denote the $n$-simplex defined by those points by $\sigma^n$. The convex hull $\tau$ of a non empty subset $W$ of $V$ will be called a face of $\sigma^n$. A \textit{(Euclidean) complex} is a collection $\mathcal{K}$ of simplexes in $\mathbb{R}^d$ such that \begin{enumerate}
	\item $\mathcal{K}$ contains all faces of all elements of  $\mathcal{K}$.
	\item If $\sigma$, $\tau\in \mathcal{K}$ are simplexes and $\sigma\cap \tau\not= \emptyset$ then $\sigma\cap \tau$ is a face of both $\sigma$ and $\tau$.
	\item Every $\sigma$ in $\mathcal{K}$ lies in an open set $U$ which intersects only a finite number of members of $\mathcal{K}$. 
\end{enumerate}

If $\mathcal{K}$ is a complex then with $|\mathcal{K}|$ we denote the union of the elements of $\mathcal{K}$. Such a set is called a \textit{polyhedron}. Note that a polyhedron can be seen as a manifold with boundary with the subspace topology induced from the standard topology of $\mathbb{R}^d$.   An $n$-\textit{cell} is a space homeomorphic to an $n$-simplex. A \textit{polyhedral~$n$-cell} is a polyhedron homeomorphic to an $n$-simplex. 

Let $M$ be a manifold with boundary. Then by $\bd M$ and $\interior M$ we denote its boundary and its interior respectively.

\begin{lemma}\cite[Theorem 1, Chapter 19]{Moise1977}\label{moise}
	Let $A$ be a polyhedral $1$-cell in $\mathbb{R}^3$ with endpoints $P$ and $Q$. Then
	there is a polyhedral 3-cell $C$ such that (1) $ \interior A \subset\interior C$, (2) $P, Q \in
	\bd C$, and (3) there is a homeomorphism $\phi: C \to \sigma^2 \times [0,1]$, such that $A\mapsto R \times [0,1]$,
	for some $R \in \interior \sigma^2$.
\end{lemma}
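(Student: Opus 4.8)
The plan is to realise $C$ as a trimmed regular (tubular) neighbourhood of $A$ in $\mathbb{R}^3$. Since $A$ is a polyhedral $1$-cell it is a finite concatenation of straight segments, so I would first fix a triangulation of $\mathbb{R}^3$ in which $A$ is a subcomplex, pass to the second barycentric subdivision, and let $N$ be the closed star of $A$ there. Regular neighbourhood theory for PL $3$-manifolds (as developed in \cite{Moise1977}) gives that $N$ is a polyhedral $3$-manifold with boundary, containing $A$ in its interior, and that $N$ collapses to $A$; as $A$ is itself collapsible, $N$ collapses to a point and hence is a polyhedral $3$-cell.

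The crucial step is to upgrade this to information about the \emph{pair} $(N,A)$: I would show there is a PL homeomorphism $\Phi\colon N\to\sigma^2\times[0,1]$ carrying $A$ onto $\{b\}\times[\tfrac13,\tfrac23]$, with $b$ the barycentre of $\sigma^2$ and the endpoints $P,Q$ going to $\{b\}\times\{\tfrac13\}$ and $\{b\}\times\{\tfrac23\}$. Conceptually this is the triviality of the PL normal block bundle of the $1$-submanifold $A$ over its collapsible base $[0,1]$. More concretely one can build $\Phi$ by hand: pick small triangles transverse to $A$ at each vertex of $A$ — taken perpendicular to the edge at interior vertices and perpendicular to the angle bisector at the break points — sweep a prism along each edge, miter consecutive prisms along the bisecting planes at the break points, and take the union; compactness of $A$ together with the positive distance between non-adjacent edges allows the triangles to be chosen thin enough that the prisms meet only along the mitering faces, so the union is a $3$-cell carrying an obvious longitudinal product parametrisation with $A$ as the central fibre.

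Granting $\Phi$, I would set $C:=\Phi^{-1}(\sigma^2\times[\tfrac13,\tfrac23])$, which is a polyhedral $3$-cell, being PL homeomorphic to $\sigma^2\times[0,1]\cong\sigma^3$. Then $\interior A$ maps under $\Phi$ to $\{b\}\times(\tfrac13,\tfrac23)\subset\interior\sigma^2\times(\tfrac13,\tfrac23)=\interior(\sigma^2\times[\tfrac13,\tfrac23])$, which is property (1); $P,Q$ map to $\{b\}\times\{\tfrac13\}$ and $\{b\}\times\{\tfrac23\}$, lying in $\sigma^2\times\{\tfrac13,\tfrac23\}\subset\bd(\sigma^2\times[\tfrac13,\tfrac23])$, which is (2); and composing $\Phi|_C$ with an affine rescaling $[\tfrac13,\tfrac23]\to[0,1]$ yields $\phi\colon C\to\sigma^2\times[0,1]$ with $\phi(A)=\{b\}\times[0,1]=R\times[0,1]$ for $R=b\in\interior\sigma^2$, which is (3).

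The hard part will be the middle step — passing from ``$N$ is a $3$-ball'' to ``$(N,A)$ is the trivial pair''. This is exactly where polyhedrality of $A$ is used rather than just its being an arc, since an arc with interior endpoints inside a $3$-ball can be knotted; the content is triviality of the normal block bundle over the collapsible base $A$, or, in the explicit version, checking that the mitered prisms at the break points of $A$ assemble into a single polyhedral $3$-cell with no unexpected self-intersection.
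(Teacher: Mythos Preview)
The paper does not give a proof of this lemma; it merely cites it as \cite[Theorem~1, Chapter~19]{Moise1977}. Your proposal is essentially the standard argument and, as far as I can tell, matches what Moise does: the explicit mitered-prism (thin-tube) construction along the broken line $A$, with thinness chosen using the positive distance between non-adjacent edges.

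One small redundancy worth noting: your two routes to the ``crucial step'' land in slightly different places. The abstract regular-neighbourhood route produces a ball $N$ with $A$ entirely in its interior, so you do need the trimming to $\Phi^{-1}(\sigma^2\times[\tfrac13,\tfrac23])$ to push $P,Q$ out to the boundary. But the explicit mitered-prism construction already gives a tube whose end caps pass through $P$ and $Q$; in that version $A$ is carried to $\{b\}\times[0,1]$ directly, and no trimming is needed. Either way the conclusion is the same, but it would read more cleanly to commit to one of the two rather than blend them.
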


If $A$ and $C$ satisfy the conditions of the above lemma, then we say that $A$ is \textit{unknotted} in $C$.

\begin{proof}[Proof of Theorem \ref{wild}]
	Consider the soshs $S$ we constructed in the previous subsection. We will construct a homeomorphism by defining it on the sets $R(n,i,j)$ for all $n$. Consider now cuboids $B_k$, $k\in\mathbb{N}$ with sides parallel to the $x$, $y$ and $z$ axis which are constructed as follows: 
\begin{itemize}
	\item $B_1$ has a square base $I^2\times \{0\}$ and height $h_1=1/2$.
	\item $B_2$ has as base $Q(2,2,2)\times \{1/2\}$ and height $h_2=\frac{1}{4}$.\\
\resizebox{0.05\hsize}{!}{$	\vdotswithin{ = }\notag \\$}

\item $B_n$ has as base $Q(n,i_n,j_n)\times \{\sum_{k=1}^{n-1}h_{k}\}$, where $Q(n,i_n,j_n) $ is the central square of\\ $Q(n-1,i_{n-1},j_{n-1})$ and height $h_{n}= \frac{1}{2^n}$.\\
\resizebox{0.05\hsize}{!}{$	\vdotswithin{ = }\notag \\$}
\end{itemize}

\begin{figure}
	\includegraphics[scale=0.25]{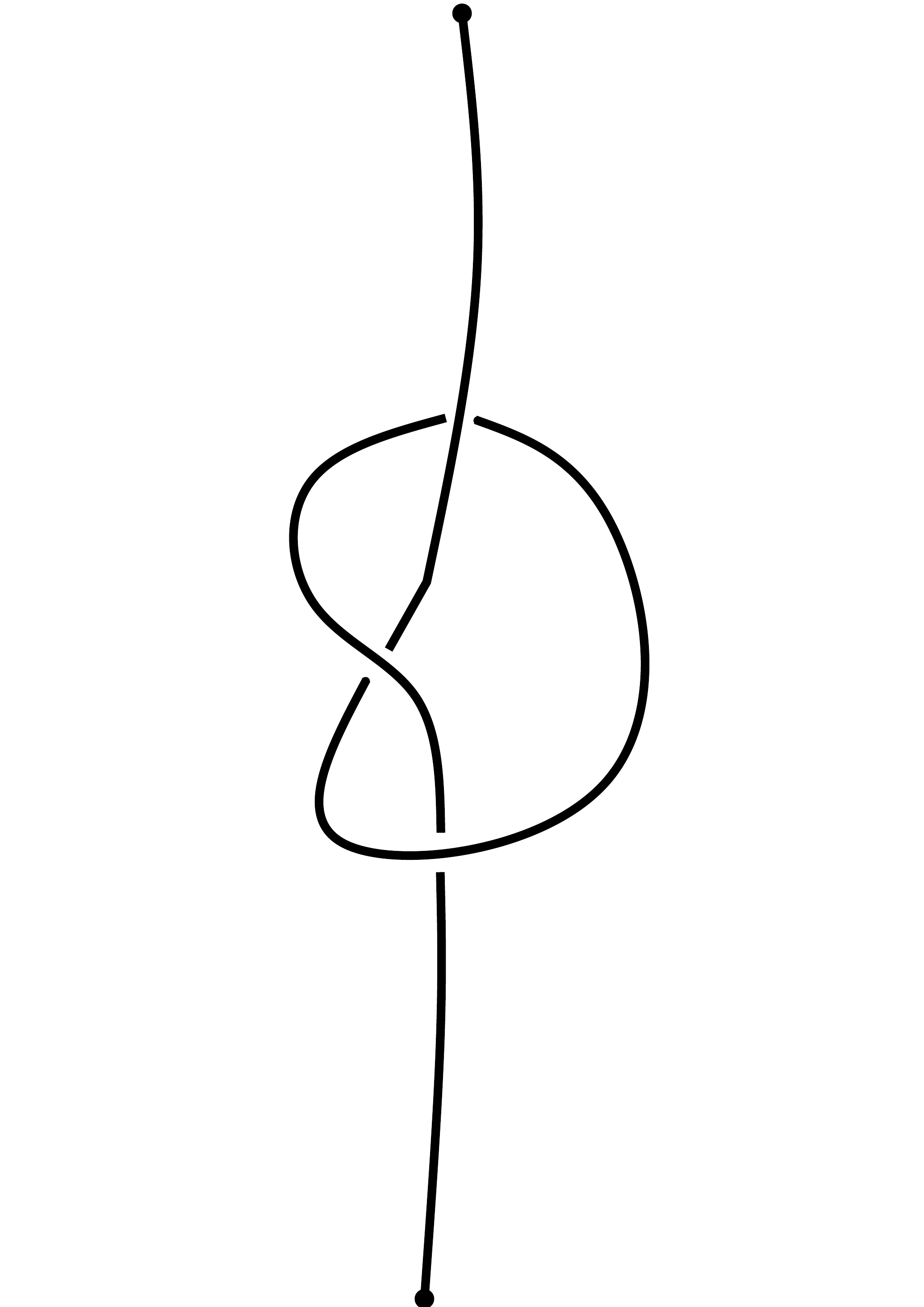}\hspace{3cm}
	\includegraphics[scale=0.25]{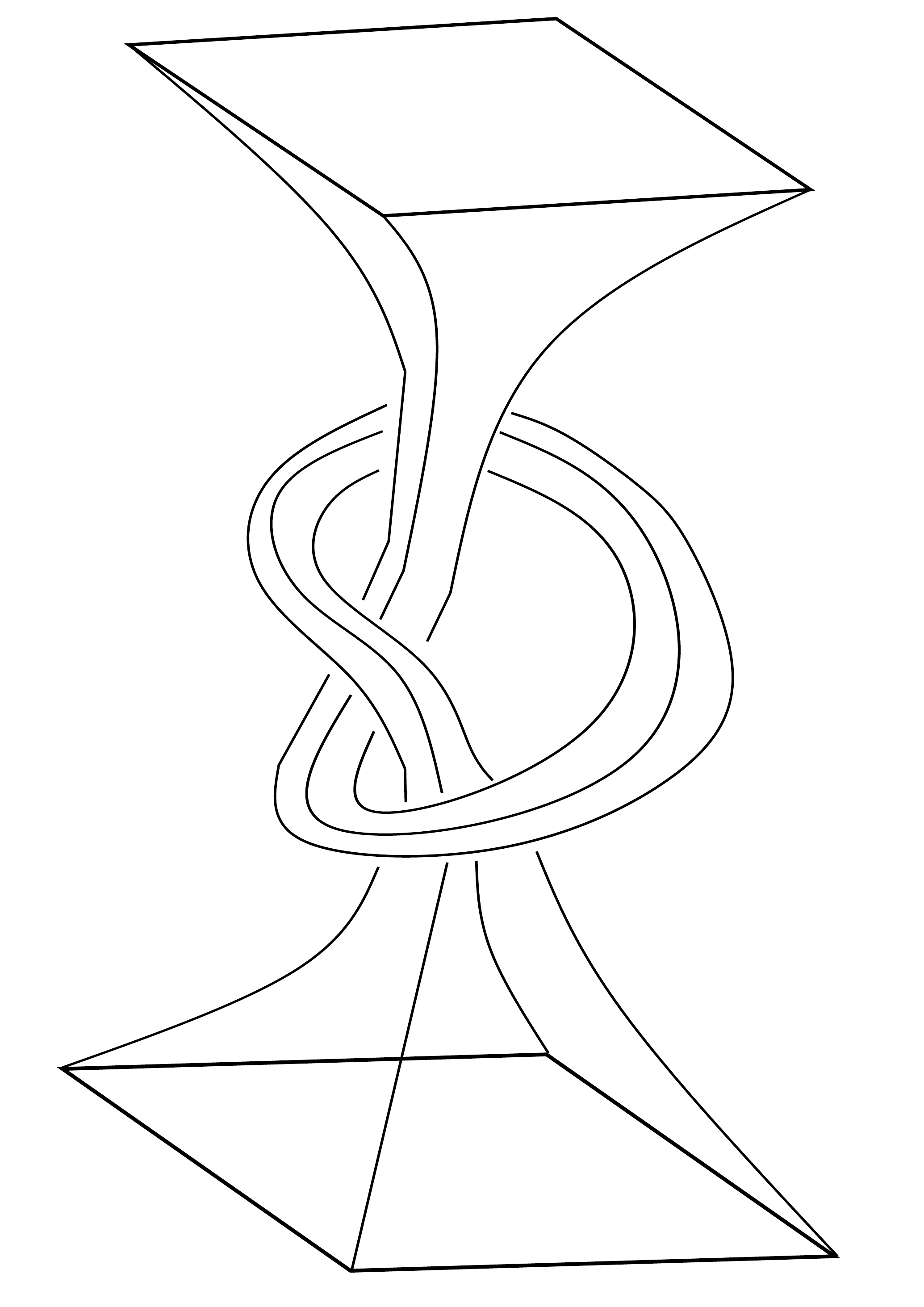}
	\caption{The polyhedral $1$-cells $A_n$ (left) and the polyhedral $3$-cells $C_n$ (right) used in the Proof of Theorem \ref{wild}.}
	\label{knots}
\end{figure}

Consider now the polyhedral $1$-cells $A_n$ inside $B_n$ that are overhand knotted (see Figure \ref{knots}) with their endpoints at $(\frac{1}{2},\frac{1}{2},h_{n-1})$ and $(\frac{1}{2},\frac{1}{2},h_{n})$. From Lemma \ref{moise} now, for each of those $1$-cells, we can find a polyhedral $3$-cell $C_n$, inside $B_n$, in which $A_n$ is unknotted and $\bd C_n$ contains the bottom and the top square bases of $B_n$.

 Lemma \ref{moise} also gives us that there are homeomorphisms $\phi_n:C_n\to\sigma^2\times[0,1]$. Since $\sigma^2\times [0,1] $ is homeomorphic to $B_n$
this implies that there are homeomorphisms $\varphi_n:~B_n\to C_n$ which can also be arranged so that they fix the square bases of $B_n$.

Define now the map $H:S\to\mathbb{R}^3$ as 
\[H(x):=\begin{cases}
	\varphi_n(x),&\hspace{2mm}\text{for}\hspace{1mm}x\in B_n,\\
	x,&\hspace{2mm}\text{otherwise}.
\end{cases}\]

It is easy to see, by our construction, that this map is a homeomorphism of $S$ onto $H(S)$.

Suppose now that there exists a homeomorphism $h:\mathbb{R}^3\to\mathbb{R}^3$ such that $h(S)=H(S)$. Any homeomorphism that sends $S$ to $H(S)$ will map the base of $S$, $I^2\times\{0\}$ to the base of $H(S)$ and the hairs of $S$ to the hairs of $H(S)$. Let $\gamma(t)$, $t\in[0,1]$ be a parametrization of the longest hair of $S$ attached at $(1/2,1/2,0)$. By \cite[Theorem 4, Chapter 19]{Moise1977} and our construction we have that $H(\gamma(t))$ will be a wild arc and since it is a hair of $H(S)$ it will be the image of a hair of $S$ under $h$. However this is a contradiction since a wild arc cannot be the image of an arc under a homeomorphism of $\mathbb{R}^3$ like $h$.
\end{proof}

\bibliographystyle{plain}
\bibliography{bibliography}
\end{document}